%
%
%
%
%
\RequirePackage{fix-cm}
\documentclass[smallextended]{svjour3}       
\smartqed  
\usepackage{graphicx}
\usepackage{amssymb,amsmath}
\usepackage[hug,balance,dpi=1200]{diagrams}
\usepackage{mathptmx}      
\newcommand{\Mult}{{\mathcal M}} 

\newcommand{\compose}{\circ}
\renewcommand{\H}{\ensuremath{{ H}}} 
\newcommand{\C}{\mathbb C}

\newcommand{\BH}{\ensuremath{B(\H)}} 
\newcommand{\tensor}{\otimes} 
\newcommand{\mintensor}{\otimes_{min}} 
\newcommand{\HtH}{\ensuremath{\H\tensor\H}}

\newcommand{\adjointable}{\ensuremath{\mathcal L}}
\newcommand{\isom}{\cong}

\newcommand{\antipode}{\ensuremath{\kappa}} 
\renewcommand{\antipode}{\ensuremath{S}} 


\newcommand{\haar}{\tau}

\renewcommand{\haar}{\tau}


\newcommand{\F}{\mbox{\ensuremath{ F}}} 
\newcommand{\inv}{\ensuremath{{}^{-1}}}
\newcommand{\Id}{\mbox{\rm Id}}

\newcommand{\arrow}{\rightarrow}

\newcommand{\coproduct}[1][\empty]{\ensuremath{\varDelta_#1}}


\newcommand{\convolution}{\diamond}   \newcommand{\convolve}{\convolution}
\newcommand{\boxproduct}{\,\ast\,} 
\newcommand{\comment}[1]{\-\marginpar[\raggedleft\footnotesize\it\textcolor{Sienna4}{#1\smallskip}]{\raggedright\footnotesize\it\textcolor{Sienna4}{#1\smallskip}}}
\renewcommand{\comment}[1]{}
\newcommand{\compact}{\ensuremath{ K}}
\newcommand{\ip}[2]{\ensuremath{\left\langle #1,#2\right\rangle}}
\newcommand{\Cu}{\ensuremath{{ C}\hspace{-.7pt}u}}


\hyphenation{mult-i-pli-cat-ive  co-mult-i-pli-cat-ive anti-mult-ipli-ca-tive co-anti-mult-ipli-ca-tive iso-mor-phism co-anti-iso-mor-phism anti-auto-mor-phism anti-auto-mor-phism auto-mor-phi-sms anti-auto-mor-phism  inter-sper-sed}

\journalname{Advances in Operator Theory}
\begin{document}

\title{{Isometries and isomorphism classes of Hilbert modules}\thanks{Thanks for financial support are due to NSERC (Canada), the Fredrik and Catherine Eaton Visitorship (Canada/UK), and IMPAN (Poland).}
}

\titlerunning{[Isomorphism classes of Hilbert modules}        

\author{Dan Z. Ku\v{c}erovsk\'{y}
}


\institute{D.  Ku\v{c}erovsk\'{y} \at
              {University of New Brunswick at Fredericton \newline%
Canada\qquad E3B 5A3} \\
              Tel.: +1-506-458-7364\\
              Fax: + 1-506-453-4705\\
              \email{dkucerov@unb.ca}           
}

\date{Received: date / Accepted: date}

\maketitle

\begin{abstract}
We show that a $A$-linear map of Hilbert $A$-modules is induced by a unitary Hilbert module operator if and only if it extends to an ordinary unitary on appropriately defined enveloping Hilbert spaces.  Applications to the theory of multiplicative unitaries  compute the equivalence classes of Hilbert modules over a class of \textsl{C*}-algebraic quantum groups. We thus develop a theory that for example could be used to show non-existence of certain co-actions. In particular, we show that the Cuntz semigroup functor takes a co-action to a multiplicative action.

\keywords{Hilbert modules, \and Cuntz semigroups,  \and \textsl{C*}-algebraic quantum group, \and multiplicative unitaries}
 \subclass{MSC Primary 47L80, 16T05; \and MSC Secondary 47L50, 16T20}
\end{abstract}

\section{Introduction}

Hilbert modules have many remarkable properties, and as pointed out by Lance \cite{lance1994} and others, apparently quite weak notions of isometry of Hilbert modules imply isomorphism of Hilbert modules. We apply this basic fact in several different settings. As a guide to possible applications, we consider briefly the Cuntz semigroup and the K-theory group. These sometimes rather technical objects become particularly attractive in the setting of C*-algebraic quantum groups, where we find that there exists a product operation on  Hilbert modules that is quite nice and seems to be distinct from, although related to, the usual interior and exterior tensor products of Hilbert modules.

We then study isomorphism classes of Hilbert modules. These have a natural semigroup structure, under direct sum. Our most advanced result is roughly as follow:

 \begin{theorem}Let the \textsl{C*}-algebraic quantum group $A$ have a left co-action upon $B$ with a faithful invariant state. Then we obtain a well-behaved action of the semiring of isomorphism classes of Hilbert modules over $A$ upon the semigroup of isomorphism classes of Hilbert modules over $B.$\end{theorem}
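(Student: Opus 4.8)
The plan is to realize the action through the same mechanism that gives Hilbert $A$-modules their semiring multiplication, namely an exterior tensor product followed by an interior tensor product twisted by a comultiplication-type map. Concretely, given a Hilbert $A$-module $E$ and a Hilbert $B$-module $F$, I would first form the exterior tensor product $E\tensor F$, which is a Hilbert $A\tensor B$-module, and then push it forward along the co-action $\delta\colon B\to M(A\tensor B)$ by forming the interior tensor product $(E\tensor F)\tensor_\delta B$, where $B$ is regarded as the $(A\tensor B)$--$B$ correspondence determined by $\delta$. This yields a Hilbert $B$-module, which I denote $E\boxproduct F$. The semiring multiplication on Hilbert $A$-modules is the identical construction with the coproduct $\coproduct\colon A\to M(A\tensor A)$ in place of $\delta$, and direct sum $\oplus$ supplies the additive semigroup structure on both sides; the module $A$ (with $\coproduct$ furnishing its correspondence structure, and the counit $\counit$ collapsing the extra leg) serves as the multiplicative unit.

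The first point is that $\boxproduct$ descends to isomorphism classes. Both exterior and interior tensor products are functorial for unitary adjointable module maps, so isomorphisms $E_1\isom E_2$ and $F_1\isom F_2$ should induce an isomorphism $E_1\boxproduct F_1\isom E_2\boxproduct F_2$. Here the earlier characterization of module isomorphisms is what makes the argument clean: rather than check adjointability and unitarity of the induced map by hand, I would extend the given isomorphisms to unitaries of the enveloping Hilbert spaces $E\tensor_A\Hgns$ (built from the GNS representation of the faithful invariant state together with the module inner products) and observe that the tensor-product unitary restricts to the required module isomorphism. Faithfulness of the invariant state is exactly what forces the enveloping inner products to be non-degenerate, so that the interior tensor product $\tensor_\delta B$ does not collapse and the induced maps are genuinely isometric.

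Next I would verify the semiring-action identities. Additivity, $(E\oplus E')\boxproduct F\isom(E\boxproduct F)\oplus(E'\boxproduct F)$ and $E\boxproduct(F\oplus F')\isom(E\boxproduct F)\oplus(E\boxproduct F')$, follows because both tensor products commute with direct sums. The crucial identity is the \emph{mixed associativity} $(E_1\boxproduct E_2)\boxproduct F\isom E_1\boxproduct(E_2\boxproduct F)$, in which the inner product on the left uses $\coproduct$ while the outer products use $\delta$; this is to be derived from the co-action axiom $(\coproduct\tensor\Id)\compose\delta=(\Id\tensor\delta)\compose\delta$, which identifies the two iterated interior tensor products after a canonical permutation of tensor legs. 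Coassociativity of $\coproduct$ gives associativity of the multiplication on Hilbert $A$-modules in the same way, and the two coherences fit together into a genuine semiring action.

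The main obstacle is precisely this mixed associativity isomorphism. The two sides are assembled from different iterated interior tensor products, and the co-action axiom asserts only equality of certain $\ast$-homomorphisms into multiplier algebras; promoting that to a unitary isomorphism of the resulting Hilbert $B$-modules is where the invariant state does the real work. I would use invariance of the state to produce the implementing multiplicative unitary and to show that it intertwines the two enveloping Hilbert-space representations, and then invoke the isometry-to-isomorphism theorem to descend from the unitary on enveloping spaces to the required module isomorphism. A secondary technical point, to be dispatched along the way, is non-degeneracy of the co-action, again a consequence of faithfulness of the invariant state, which is needed to ensure that $(E\tensor F)\tensor_\delta B$ is full enough for the functoriality used above.
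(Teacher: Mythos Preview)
Your construction has a directionality problem that is not merely cosmetic. You write that $B$ becomes an $(A\tensor B)$--$B$ correspondence ``determined by $\delta$,'' but $\delta\colon B\to M(A\tensor B)$ makes $A\tensor B$ into a $B$--$(A\tensor B)$ correspondence, not the other way round. There is no natural $*$-homomorphism $A\tensor B\to M(B)$ coming from $\delta$, so the interior tensor product $(E\tensor F)\tensor_\delta B$ is not defined as you have written it. What you actually need is a way to pass from Hilbert $A\tensor B$-modules back to Hilbert $B$-modules, and producing that ``inverse'' direction is exactly the nontrivial step; it is not automatic from the co-action axioms, and this is where the faithful invariant state does its real work (not, as you suggest, merely in ensuring non-degeneracy of an already-defined tensor product).

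The paper does not use the correspondence formalism at all. Instead it constructs, in Lemma~\ref{lem:special.module.coaction.case}, an explicit $\C$-linear map $W\colon\H_B\to A\tensor B$ with the skew-linear property $W(bx)=\delta(b)W(x)$, and shows using the invariant state $\eta$ that $W$ extends to a genuine unitary between the enveloping GNS Hilbert spaces (built from $\eta$ and $f\tensor\eta$ respectively). One then defines $E\boxproduct F$ as the Hilbert-module closure of $W^{-1}(E\tensor F)$ inside $\H_B$; well-definedness on isomorphism classes follows by conjugating an $A\tensor B$-linear isomorphism through $W$ and $W^{-1}$ and invoking Proposition~\ref{prop:2isometries.want.to.be.isomorphisms} and Theorem~\ref{th:equivalences.of.maps}, exactly as in Theorem~\ref{th:invariant}. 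Mixed associativity then follows from $(\Id\tensor\delta)\delta=(\coproduct\tensor\Id)\delta$. If you want to recast this in your language, the map $W$ is precisely what identifies $A\tensor B$ (with its $\delta$-twisted left $B$-action) with the $(A\tensor B)$--$B$ correspondence $L^2(A,f)\tensor B\cong\H_B$, but that identification is the content of the lemma, not a triviality.
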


In the above, by an invariant state for a left co-action $\delta$ of a compact or discrete \textsl{C*}-algebraic quantum group $(A,\coproduct{} )$ on $B$ we mean a state $\eta$ on $B$ such that $(\Id\tensor\eta)\delta=\eta(\cdot)1.$ By a well-behaved action of the semiring we mean a multiplicative action where the product on the ring distributes over the direct sum in the semigroup.
As well as providing some kind of invariant associated with a co-action, it is quite possible that the above can be viewed as an algebraic topology type of obstacle to the existence of certain co-actions. In other words, we could show non-existence of certain co-actions by showing that one of the implied equations for the action of the semiring on the semigroup must fail.

The work of Goswami \cite{Goswami} on non-existence of certain co-actions  provides evidence that the question of nonexistence may be interesting. Our methods are however completely different than Goswami's. It is possible that eventually our work may find application to showing noncommutative Borsuk-Ulam type theorems \cite{BDH2015}, which roughly speaking assert the lack of existence of a co-action on a special kind of \textsl{C*}-algebra, a noncommutative join.

The paper is organized as follows. In section 2 are the main results on Hilbert modules and the details of the already mentioned product operation. In section 3 we compute this product in several cases. In section 4 we apply our theory to the case of K-theory. In section 5 we consider the mostly finite-dimensional theory of Hopf ideals. In section 6 we give our deepest results, pertaining to co-actions.

\section{Hilbert modules and maps}

We recall that the standard Hilbert module $\H_B$ over a C*-algebra $B$ is by definition the set of sequences $(b_i )$ such that $\sum b_i ^* b_i$ converges in the C*-norm on $B.$ It is a subtle\footnote{There is an amusing discussion on page 239 of \cite{WeggeOlsen}
about various pitfalls in finding the right definitions to make this work.}
fact that (see for example, \cite[pg. 34-35]{lance1994}) this is isomorphic to the (exterior) tensor product $B\tensor \H$ where $\H$ denotes the usual countably generated complex Hilbert space.

\begin{proposition} Suppose $A$ is a C*-algebra with faithful state $g$ and that $H_1$ and $H_2$ are (pre)Hilbert modules over $A.$ Let the $A$-module map $W\colon H_1\arrow H_2$ satisfy the property $$ g\left( \ip{x}{y}_{H_1}\right)= g\left(\ip{Wx}{Wy}_{H_2}\right)$$ for all $x$ and $y$ in $H_1.$The map $W$ is then an isometry with respect to the Hilbert module norms.\label{prop:2isometries.want.to.be.isomorphisms}
\end{proposition}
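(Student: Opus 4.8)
The plan is to leverage the $A$-linearity of $W$ together with the GNS construction for the faithful state $g$, in order to promote the single scalar identity in the hypothesis to an identity of positive elements of $A$. First I would fix $x\in H_1$ and abbreviate $c:=\ip{x}{x}_{H_1}$ and $d:=\ip{Wx}{Wx}_{H_2}$, both positive elements of $A$. Replacing $x$ and $y$ by $xa$ and $ya$ (with $a\in A$ arbitrary) in the hypothesis, and using $W(xa)=(Wx)a$ together with the elementary identity $\ip{za}{za}=a^{*}\ip{z}{z}a$, I obtain
$$ g(a^{*}ca)=g(a^{*}da)\qquad\text{for all }a\in A. $$

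Next I would pass to the GNS triple $(\pi_g,\Hgns,\xi_g)$ associated to $g$, so that $g(b)=\ip{\pi_g(b)\xi_g}{\xi_g}$. Writing $\eta:=\pi_g(a)\xi_g$, the displayed identity reads $\ip{\pi_g(c)\eta}{\eta}=\ip{\pi_g(d)\eta}{\eta}$, valid first on the dense cyclic subspace $\pi_g(A)\xi_g$ and then, by boundedness of $\pi_g(c)$ and $\pi_g(d)$ and continuity, on all of $\Hgns$. Since $\pi_g(c)-\pi_g(d)$ is a bounded self-adjoint operator whose quadratic form vanishes, it must be zero, so $\pi_g(c)=\pi_g(d)$. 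Because $g$ is faithful, its GNS representation $\pi_g$ is faithful, whence $c=d$; that is, $\ip{x}{x}_{H_1}=\ip{Wx}{Wx}_{H_2}$. The module-norm isometry is then immediate, since $\norm{Wx}^{2}=\norm{\ip{Wx}{Wx}_{H_2}}=\norm{\ip{x}{x}_{H_1}}=\norm{x}^{2}$, and $x$ was arbitrary.

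The step I expect to be the real obstacle is bridging the gap between the two "sizes" at play. The hypothesis only sees the scalar $g$-inner product, so taken by itself it proves isometry merely for the enveloping-Hilbert-space norm $x\mapsto g(\ip{x}{x})^{1/2}$ and says nothing directly about the genuine module norm. The maneuver that does the work is the insertion of the arbitrary module element $a$: this converts the bare hypothesis into control of the entire family of quadratic forms $a\mapsto g(a^{*}ca)$, which is exactly the information carried by the vector states $\ip{\pi_g(\cdot)\eta}{\eta}$ as $\eta$ ranges over a dense subspace. Faithfulness of $g$ is then what finally upgrades equality of the operators $\pi_g(c)$ and $\pi_g(d)$ to equality of $c$ and $d$ in $A$; without either $A$-linearity or faithfulness the conclusion would fail.
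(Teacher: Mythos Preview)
Your argument is correct, and the opening move is the same as the paper's: exploit $A$-linearity by replacing $x$ with $xa$ to upgrade the single scalar identity to the family $g(a^{*}\ip{x}{x}a)=g(a^{*}\ip{Wx}{Wx}a)$ for all $a\in A$. From this point the two proofs diverge. The paper feeds this family into a dual norm formula $\|y\|_{A}=\sup_{c}|g(cyc)|/g(c^{2})$, justified via Sakai's Radon--Nikod\'ym theorem in the double dual, and concludes only that $\|\ip{x}{x}_{H_1}\|=\|\ip{Wx}{Wx}_{H_2}\|$. You instead route the identity through the GNS representation $\pi_g$, observe that the quadratic forms of $\pi_g(\ip{x}{x})$ and $\pi_g(\ip{Wx}{Wx})$ agree on the dense cyclic subspace and hence everywhere, and use faithfulness of $\pi_g$ to obtain the stronger conclusion $\ip{x}{x}_{H_1}=\ip{Wx}{Wx}_{H_2}$ in $A$. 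Your route is more elementary (no appeal to $A^{**}$ or Radon--Nikod\'ym) and delivers more: it shows $W$ actually preserves the $A$-valued inner product, not merely the module norm. The paper's version, on the other hand, is tailored to the norm conclusion that Theorem~\ref{th:equivalences.of.maps} actually requires, and makes explicit the density of the functionals $g(c\,\cdot\,c)$ among positive functionals, a fact that is implicit in your GNS argument.
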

\begin{proof}
Given   $y\in A^{+}$, for $\sigma$-weakly continuous (positive) linear functionals $h$ we have the dual norm formula
$$\|y\|_{A} =\sup\frac{|h(y)|}{\|h\|}.$$ By the Radon--Nicod\'ym theorem \cite{Sakai1965},  it is sufficient to consider linear functionals of the form $h=g(c\cdot c)$ where  $c$ comes from the double dual. It follows that linear functionals of the form $h=g(c\cdot c)$ with $c$ in the algebra are dense within the  class of linear functionals that we took the supremum over.

We thus have that
$$\|y\|_{A}=\sup_{c\not=0} \frac{|g(cyc)|}{\|g(c\cdot c)\|}.$$ The norm of a positive linear functional is equal to the value of the functional at $1,$ so that $\|g(c\cdot c)\|=g(c^2).$

(This could also have been shown using the Kadison transitivity theorem.)

 In any case, we have that
$$\|y\|_{A}=\sup_{c\not=0} \frac{|g(cyc)|}{g(c^2)},$$ where the supremum is over the nonzero positive elements $c$ in $A,$ and $g$ is the given faithful state.

Then we notice that evidently
\begin{eqnarray*}
 g\left(c\ip{x}{x}_{H_1}c\right) & =& g\left( \ip{c x}{cx}_{H_1}\right) \\  &=& g\left(\ip{Wc x}{Wcx}_{H_2}\right) \\  &=& g\left(c\ip{Wx}{Wx}_{H_2}c\right).\end{eqnarray*}
Then it follows from the first part of the proof that$\|\ip{x}{x}_{H_1}\|_{A}=\|\ip{Wx}{Wx}_{H_2}\|_{A}.$

By the polarization identity, $W$ is thus an isometry, with respect to the Hilbert module norms, as was to be shown.
\end{proof}

We now make some remarks leading to a pleasant geometric interpretation of the above lemma.

We remark first that if $A$ is a \textsl{C*-}algebra with a faithful state, $f,$ we may construct the GNS Hilbert space $\H$ associated with it. The usual direct sum over all states can be omitted if we are given a faithful state.
This construction can be generalized somewhat: \label{pageref:GNSconstr} it is sufficient to have   a countably generated Hilbert module over the given C*-algebra $A,$ and then the given faithful state together with the usual GNS construction provides a GNS Hilbert space $\H$ that envelopes the given Hilbert module.  We clarify our statements by giving the construction briefly. Thus, we regard the elements of $E$ as belonging to a pre-Hilbert space with inner product $f(\ip{ e_1}{e_2 })$.
Since $f$ is a faithful state and $\ip{e_1}{e_2}$ is a Hilbert module inner product, the pre-inner product is faithful, and we don't need to take quotients by a subspace of indefinite vectors as in the usual GNS construction. The upshot is that we obtain a Hilbert space $H$ inside which the given Hilbert module $E$ is a subspace. Generally the subspace will not be closed.
 We should clarify that this construction is not the induced representation (see \cite[24]{RW}) construction. Instead of forming a tensor product and building an imprimitivity bimodule, we are simply using the elements of the given Hilbert module $E$ as the elements for the GNS construction.

Now, we combine our lemma with some known results on Hilbert modules \cite[Prop 3.3]{MF}.

\begin{theorem}Let $W\colon H_1 \arrow H_2$ be a surjective $A$-linear map between Hilbert $A$-modules, where $A$ is a C*-algebra with faithful state, $g.$ Then $W$ is a Hilbert module unitary in $\mathcal{L}(H_1,H_2)$ if and only if $W$  has the  property $ g\left(\ip{x}{y}_{H_1}\right)= g\left(\ip{Wx}{Wy}_{H_2}\right)$  for all $x$ and $y$ in $H_1.$                        \label{th:equivalences.of.maps}
\end{theorem}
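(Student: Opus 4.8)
The plan is to treat the two implications separately; the forward implication is a formality, while the reverse implication carries the real content and is obtained by chaining Proposition~\ref{prop:2isometries.want.to.be.isomorphisms} with the cited structural result \cite[Prop 3.3]{MF}.

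First I would dispatch the easy direction. If $W$ is a Hilbert module unitary in $\mathcal{L}(H_1,H_2)$, then by definition $W^{*}W=\Id_{H_1}$, so that $\ip{Wx}{Wy}_{H_2}=\ip{x}{W^{*}Wy}_{H_1}=\ip{x}{y}_{H_1}$ as an identity of elements of $A$, for all $x,y\in H_1$. Applying the state $g$ to both sides yields the asserted property at once; note that this direction uses neither surjectivity of $W$ nor faithfulness of $g$.

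For the reverse direction, suppose $W$ satisfies $g\left(\ip{x}{y}_{H_1}\right)=g\left(\ip{Wx}{Wy}_{H_2}\right)$ for all $x,y$. Proposition~\ref{prop:2isometries.want.to.be.isomorphisms} applies verbatim, with the given faithful state $g$, and tells us that $W$ is isometric for the Hilbert module norms, that is, $\norm{Wx}=\norm{x}$. We are then in the presence of a surjective, $A$-linear, norm-preserving map between Hilbert $A$-modules, which is precisely the situation governed by \cite[Prop 3.3]{MF}: a surjective $A$-linear isometry of Hilbert modules is automatically adjointable with isometric inverse, hence a unitary in $\mathcal{L}(H_1,H_2)$. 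The only points requiring care are the verifications that the hypotheses of the cited result are met, namely that surjectivity and $A$-linearity are assumed in the statement while the needed isometry is supplied by Proposition~\ref{prop:2isometries.want.to.be.isomorphisms}; after this the conclusion is immediate.

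The main obstacle is conceptual rather than computational: it is the passage from a mere norm isometry to genuine preservation of the $A$-valued inner product, equivalently to adjointability together with $W^{*}W=\Id_{H_1}$ and $WW^{*}=\Id_{H_2}$. Over Hilbert spaces this step is automatic by polarization, but over Hilbert modules a bounded $A$-linear map need not even be adjointable, so the argument genuinely relies on the rigidity phenomenon recorded in \cite[Prop 3.3]{MF} and alluded to in the introduction following Lance. Surjectivity is essential here and cannot be dropped, since it is what forces the inverse to exist and to be isometric as well.
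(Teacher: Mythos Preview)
Your proposal is correct and follows essentially the same route as the paper: invoke Proposition~\ref{prop:2isometries.want.to.be.isomorphisms} to upgrade the state-level identity to a Hilbert module norm isometry, and then appeal to the Lance/Blecher/Frank rigidity result for surjective $A$-linear isometries. The only cosmetic difference is that the paper splits the last step in two---first citing \cite[Theorem 3.2]{blecher}, \cite{lance1994}, \cite[Prop.~3.3]{MF} to obtain a Hilbert module isomorphism, and then separately invoking \cite{lance1994} for automatic adjointability---whereas you extract both conclusions from \cite[Prop.~3.3]{MF} in one stroke; this is a difference of packaging, not substance.
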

\begin{proof} The main thing to show is that the given property implies the apparently much stronger Hilbert module unitary property. Our Proposition \ref{prop:2isometries.want.to.be.isomorphisms} shows that the given property implies $W$ is an isometry with respect to Hilbert module norms in the sense that $\|x\|_{H_1}=\|W(x)\|_{H_2}$ . It is known \cite[Theorem 3.2]{blecher}\cite{lance1994}\cite[Prop. 3.3]{MF} that a surjective  $A$-linear Banach module map $F$ of Hilbert $A$-modules that has these properties $F(ax)=aF(x)$ and $\|x\|_{E_1}=\|F(x)\|_{E_2}$ is automatically an isomorphism of Hilbert modules.  Finally, Theorem 1 in the book \cite{lance1994} shows that an isomorphism of Hilbert  $A$-modules is automatically adjointable, and therefore is a unitary operator in the Hilbert module sense.
\end{proof}

Now the promised striking geometrical interpretation of the above facts can be stated:
\begin{corollary} An $A$-linear map of Hilbert $A$-modules is induced by a unitary Hilbert module operator if and only if it extends to an ordinary unitary on the enveloping GNS Hilbert spaces. \end{corollary}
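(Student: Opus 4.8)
The plan is to prove the corollary by unwinding the two directions through Theorem~\ref{th:equivalences.of.maps}, using the GNS enveloping construction described on page~\pageref{pageref:GNSconstr} as the bridge between the Hilbert-module world and the ordinary Hilbert-space world. The key observation is that the defining property in Theorem~\ref{th:equivalences.of.maps}, namely $g\left(\ip{x}{y}_{H_1}\right)=g\left(\ip{Wx}{Wy}_{H_2}\right)$, is exactly the statement that $W$ preserves the GNS pre-inner product $\ip{\cdot}{\cdot}_{\text{\tiny GNS}}:=g(\ip{\cdot}{\cdot})$ on the given modules. So the whole corollary amounts to translating ``$W$ preserves $g(\ip{\cdot}{\cdot})$'' into the geometric language ``$W$ extends to a unitary on the enveloping Hilbert spaces,'' and then invoking the theorem.

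For the forward direction, I would suppose $W$ is (induced by) a Hilbert module unitary $U\in\mathcal{L}(H_1,H_2)$. Then $\ip{Ux}{Uy}_{H_2}=\ip{x}{y}_{H_1}$ as elements of $A$, so applying the faithful state $g$ gives $g\left(\ip{x}{y}_{H_1}\right)=g\left(\ip{Ux}{Uy}_{H_2}\right)$; this is precisely inner-product preservation for the GNS pre-inner products. Since the GNS enveloping space $\H_i$ is the completion of $H_i$ under $\ip{\cdot}{\cdot}_{\text{\tiny GNS}}$, the map $W$ is an isometry for the GNS norms and hence extends continuously to an isometry $\widetilde{W}\colon\H_1\to\H_2$. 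Surjectivity of the extension follows because $W$ is already surjective onto $H_2$, whose image is dense in $\H_2$; a surjective isometry of Hilbert spaces is unitary, giving the ordinary unitary extension.

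For the converse, I would assume $W$ extends to an ordinary unitary $\widetilde{W}\colon\H_1\to\H_2$ on the enveloping GNS Hilbert spaces. Restricting the inner-product–preservation of $\widetilde{W}$ to the dense subspaces $H_i\subset\H_i$ yields $g\left(\ip{x}{y}_{H_1}\right)=\ip{x}{y}_{\text{\tiny GNS}}=\ip{\widetilde{W}x}{\widetilde{W}y}_{\text{\tiny GNS}}=g\left(\ip{Wx}{Wy}_{H_2}\right)$ for all $x,y\in H_1$, which is exactly the hypothesis of Theorem~\ref{th:equivalences.of.maps}. Applying that theorem (which requires surjectivity and $A$-linearity of $W$, both inherited from the setup) concludes that $W$ is a Hilbert module unitary, and hence induces one in the sense of the corollary.

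The main obstacle I anticipate is not in the algebra but in the precise meaning of ``induced by'' and in justifying that the extension $\widetilde{W}$ genuinely restricts to an $A$-linear surjection $W$ of the original modules rather than merely a Hilbert-space map. In the forward direction one must check that the continuous extension of $W$ to the completion exists and is unitary, which is routine once isometry is established; in the converse, the delicate point is that a purely Hilbert-space unitary $\widetilde{W}$ need not a priori respect the $A$-module structure, so the corollary is implicitly asserting that the relevant $\widetilde{W}$ arises as the GNS extension of an $A$-linear module map $W$. Provided this compatibility is built into what ``extends'' means, the two directions close cleanly through Theorem~\ref{th:equivalences.of.maps}, and no genuinely new estimate beyond the density of $H_i$ in $\H_i$ and the faithfulness of $g$ is needed.
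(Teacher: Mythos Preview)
Your proposal is correct and matches the paper's approach: the paper states this corollary without proof, presenting it as the ``striking geometrical interpretation'' of Theorem~\ref{th:equivalences.of.maps}, and your argument supplies precisely the details the paper leaves implicit---namely, that the condition $g(\ip{x}{y}_{H_1})=g(\ip{Wx}{Wy}_{H_2})$ in Theorem~\ref{th:equivalences.of.maps} is exactly preservation of the GNS pre-inner product, so that the theorem's hypothesis is equivalent to $W$ extending to a Hilbert-space unitary on the completions. Your caveat about surjectivity and the meaning of ``extends'' is well placed, since Theorem~\ref{th:equivalences.of.maps} does require $W$ to be surjective onto $H_2$, and the corollary is tacitly reading this into the setup.
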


To specify  which faithful state is being used to construct the Hilbert space (using the method discussed on page \pageref{pageref:GNSconstr}) we will say ``enveloping GNS Hilbert space of $\H_A$ with respect to $g$''
or language to a similar effect.

In the setting of C*-algebraic quantum groups, we will observe a situation where a Hilbert space unitary acts on a Hilbert $A$-module but instead of being $A$-linear it has a skew-linear property that will be explained later. This skew linear property will give us the opportunity to make an interesting application of the above theorem.

Actually, there exist in the literature a few different conditions that are ultimately equivalent to isomorphism for Hilbert modules, and we can state a corollary clarifying the differences and rounding out our result by adding a few of these conditions:

\begin{corollary}Let $U$ be an  $A$-linear map surjective between Hilbert $A$-modules. TFAE:
 \begin{enumerate}\item $U$ is a unitary element of the bounded adjointable operators,
\item $U$ is a Hilbert module isomorphism,
\item $U$ is a Hilbert module norm isometry, and
\item $U$ has the property $ \phi\left(\ip{x}{y}_{E_1}\right)= \phi\left(\ip{Ux}{Uy}_{E_2}\right)$.
\end{enumerate}\label{th:equivalences.of.maps}
In the above, $\phi$ is some fixed choice of faithful state on $A.$
\end{corollary}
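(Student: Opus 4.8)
The plan is to establish all four equivalences by closing a single cycle of implications, namely $(1)\Rightarrow(4)\Rightarrow(3)\Rightarrow(2)\Rightarrow(1)$, since the substantial analytic work has already been carried out in Proposition \ref{prop:2isometries.want.to.be.isomorphisms} and in the preceding theorem. The only genuinely new content of the corollary is the insertion of the intermediate conditions (2) and (3) into the chain connecting the two endpoints already treated.

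First I would dispatch the two easy implications. For $(1)\Rightarrow(4)$: if $U$ is a unitary adjointable operator then $U^*U=\Id$, so that $\ip{Ux}{Uy}_{E_2}=\ip{x}{U^*Uy}_{E_1}=\ip{x}{y}_{E_1}$ holds exactly at the level of the $A$-valued inner products; applying the state $\phi$ to both sides yields (4), with no need for faithfulness here. For $(3)\Rightarrow(2)$: an $A$-linear surjection that preserves the Hilbert module norms is, by the standard rigidity results \cite[Theorem 3.2]{blecher}\cite{lance1994}\cite[Prop. 3.3]{MF}, automatically a Hilbert module isomorphism.

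The remaining two implications carry the weight, but both are already available. The step $(4)\Rightarrow(3)$ is exactly Proposition \ref{prop:2isometries.want.to.be.isomorphisms} applied with $g=\phi$; this is the place where faithfulness of $\phi$ is indispensable, since it underlies the dual-norm / Radon--Nicod\'ym representation of the module norm used there. The step $(2)\Rightarrow(1)$ follows from the fact that an isomorphism of Hilbert $A$-modules is automatically adjointable \cite{lance1994}, so that a surjective, norm-isometric, $A$-linear isomorphism admits an adjoint and is therefore a unitary element of $\mathcal{L}(E_1,E_2)$.

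The hard part is really located upstream, in the passage from the weak state-level identity (4) to genuine isometry (3): this is the content of Proposition \ref{prop:2isometries.want.to.be.isomorphisms}, whose proof rests on the representation $\|y\|_A=\sup_{c\neq 0}|\phi(cyc)|/\phi(c^2)$ and on testing it against $y=\ip{x}{x}$. Once that proposition and the standard Hilbert-module rigidity results are in hand, the corollary is merely their bookkeeping into a closed cycle, and no further obstacle remains.
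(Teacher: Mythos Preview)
Your proposal is correct and matches the paper's approach exactly. The paper does not give a separate proof of this corollary; it is presented as an immediate repackaging of the preceding theorem, whose proof already runs the chain $(4)\Rightarrow(3)$ via Proposition~\ref{prop:2isometries.want.to.be.isomorphisms}, $(3)\Rightarrow(2)$ via the Blecher/Lance/Frank rigidity results, and $(2)\Rightarrow(1)$ via Lance's automatic adjointability, with $(1)\Rightarrow(4)$ being trivial --- precisely the cycle you spell out.
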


Hopf algebras are bi-algebras with an antipode map $\antipode.$ See \cite{abe} for information on Hopf algebras in the algebraic setting. If one tries to frame the nice  theory of Hopf algebras in a C*-algebraic setting, there are several slightly different approaches that can be taken.

The theory of Kats and Pal'yutkin \cite{KP1966},  its further development by Enock and Schwartz \cite{EnockSchwartz}; the theory of Woronowicz \cite{woronowicz1987,PW1990}, of Baaj and Skandalis \cite{BS,BSk.Hopf,BBS}, and of Kustermans and Vaes \cite{KustermansVaes} should especially be mentioned.  The presence or absence of multiplicative units makes a difference in the theory, and generally the unital case, also known as the compact case, is simpler.

For example, in the especially attractive Baaj--Skandalis framework of C*-algebraic quantum groups provided by \cite{BS}, a compact C*-algebraic quantum group is unital as an algebra, and has structure maps that are compatible with the C*-algebraic structure.
In this picture, a compact quantum group $(A ,\coproduct{})$ is first of all a unital C$^*$-algebra $A$ with a coproduct map.
The coproduct map is a unital $*$-homomorphism $\coproduct{} \colon A\rightarrow A\otimes A$
such that $(\coproduct{}\otimes\iota)\coproduct{}=(\iota\otimes\coproduct{})\coproduct{}$ and such that
$\coproduct{}(A)(A\otimes1)$ and $\coproduct{}(A)(1\otimes A)$ are dense in $A\otimes
A$.

For any compact quantum group there exists a unique Haar state
$\tau$ of $A$ which is left- and right-invariant, i.e.,
$(\iota\otimes \tau)\coproduct{} =\tau(\cdot)1$ and
$(\tau\otimes\iota)\coproduct{}=\tau(\cdot)1$, respectively. This
functional is called the Haar functional and is very usually assumed to
be faithful.

Multiplicative unitaries were introduced by Baaj and Skandalis in  \cite{BS} (Chapter 4), see also \cite{BBS}.

Given a compact quantum group  $A$ with coproduct $\coproduct{}\colon A\arrow  A \mintensor A,$ and faithful Haar state $\haar\colon A\arrow\C,$ let $\H$ be the Hilbert space $L^2 (A,\haar)$ that comes from the GNS construction applied to $A$ via $\haar.$ Let $e\in \H$  be the image $e:=\pi_{\haar}1$ of $1$ in $\H.$ Then, $Ae$ is dense in $\H$ and there is a multiplicative unitary $V\in\adjointable(\HtH)$ by $$Vae\tensor be=\coproduct{}(a)e\tensor be.$$ A different definition, $W,$ satisfying $$W^* ae\tensor be=\coproduct{}(b)ae\tensor e,$$ was used in a more general setting (the locally compact case) by Kustermans and Vaes \cite{KustermansVaes}, see also \cite{MaesVanDaele}.

The next lemma reminds one of the formula for coproducts in terms of multiplicative unitaries:  $\coproduct{}(a)=V(x\tensor 1)V^*.$ See, \textit{e.g.} \cite[Th. 3.8]{BS}.

\begin{lemma}Let $A$ be a separable, nuclear, and unital \textsl{C*}-algebraic quantum group, with faithful left Haar state $g$.    There exists a $\C$-linear map  $V\colon {  A\tensor A}\arrow A\otimes\H$ such that
$V(\coproduct{}(a)x)=aV(x),$ for all $a\in A$ and $x\in A\tensor A.$  This map extends to an ordinary unitary on the enveloping Hilbert spaces with  respect to the faithful states $g\tensor g$ on $A\tensor A$ as a Hilbert module over itself, and $ g $ on $A\tensor\H$ viewed as a Hilbert $A$-module.
\label{lem:special.module.coproduct.case}\end{lemma}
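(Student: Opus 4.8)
The plan is to produce $V$ on a convenient dense subspace, read off the intertwining relation by inspection, and then reduce the whole unitarity claim to a single computation governed by invariance of the Haar state. First I would record that both enveloping Hilbert spaces are canonically $\HtH$: the enveloping space of $A\tensor A$ with respect to $g\tensor g$ is $L^2(A\tensor A,\,g\tensor g)\isom\HtH$, while the enveloping space of the Hilbert module $A\tensor\H$ with respect to $g$ is $L^2(A,g)\tensor\H\isom\HtH$. Here separability and nuclearity are what make these spatial identifications unambiguous and guarantee that $A\tensor\H$ is countably generated, so that the enveloping construction of page~\pageref{pageref:GNSconstr} applies. Writing $ce\in\H$ for the GNS image of $c\in A$, the two embeddings $A\tensor A\arrow\HtH$, $c\tensor d\mapsto ce\tensor de$, and $A\tensor\H\arrow\HtH$, $c\tensor\xi\mapsto ce\tensor\xi$, are both injective because $g$ is faithful.

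Since $A$ is compact, $\coproduct{}(A)(1\tensor A)$ is dense in $A\tensor A$, so I would define $V$ on this dense subspace by $$V\big(\coproduct{}(a)(1\tensor b)\big):=a\tensor be.$$ The intertwining property is then immediate on the dense domain: because $\coproduct{}(a')\coproduct{}(a)(1\tensor b)=\coproduct{}(a'a)(1\tensor b)$, both $V(\coproduct{}(a')x)$ and $a'V(x)=(a'\tensor 1)V(x)$ equal $a'a\tensor be$ when $x=\coproduct{}(a)(1\tensor b)$, and the general statement follows by linearity and continuity.

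The heart of the argument is the verification that $V$ preserves the enveloping inner products, since this single computation yields both well-definedness and isometry. For $x=\coproduct{}(a)(1\tensor b)$ and $x'=\coproduct{}(a')(1\tensor b')$ one has, using that $\coproduct{}$ is a $*$-homomorphism, $\ip{x}{x'}_{A\tensor A}=(1\tensor b^*)\coproduct{}(a^*a')(1\tensor b')$; applying $g\tensor g$ and the invariance identity $(g\tensor\Id)\coproduct{}(c)=g(c)1$ collapses the first leg and leaves $$(g\tensor g)\big(\ip{x}{x'}_{A\tensor A}\big)=g(a^*a')\,g(b^*b').$$ On the other side, $\ip{a\tensor be}{a'\tensor b'e}_{A\tensor\H}=a^*a'\,\ip{be}{b'e}_{\H}=a^*a'\,g(b^*b')$, so that $g\big(\ip{Vx}{Vx'}\big)=g(a^*a')\,g(b^*b')$ as well. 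Because these two sesquilinear forms agree on the dense domain, $V$ is well-defined there (if $x=0$ the same identity shows the candidate value $Vx$ is orthogonal in $\HtH$ to the dense set $A\tensor Ae$, whence $Vx=0$ by injectivity of the second embedding) and extends to an isometry of $\HtH$.

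Finally, the range of $V$ on the dense domain is $A\tensor Ae$, whose image $Ae\tensor Ae$ is dense in $\HtH$; an isometry with dense range is a unitary, so $V$ extends to an ordinary unitary on the enveloping Hilbert spaces, which is the assertion. I would close by observing that this Hilbert-space extension is precisely the adjoint of the multiplicative unitary recalled before the statement (the one sending $ae\tensor be$ to $\coproduct{}(a)(e\tensor be)$, which is exactly the image of $\coproduct{}(a)(1\tensor b)$), giving an independent confirmation of unitarity. The one genuinely delicate point is the well-definedness of $V$ on the non-canonically presented subspace $\coproduct{}(A)(1\tensor A)$; everything hinges on the Haar-invariance computation above, which is what forces the value $a\tensor be$ to depend only on the element $\coproduct{}(a)(1\tensor b)$ and not on its presentation.
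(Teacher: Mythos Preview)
Your proof is correct. The paper's own argument is quite different in presentation: it simply invokes the multiplicative unitary $W\in\Mult(A\tensor\compact(\H))$ (citing Baaj--Skandalis and Kustermans--Vaes) and sets $V(x):=W^{*}x(1\tensor e)$, so that both well-definedness and unitarity at the Hilbert-space level are imported from those references rather than computed. Your route is more self-contained: you define $V$ on the dense subspace $\coproduct{}(A)(1\tensor A)$ by $\coproduct{}(a)(1\tensor b)\mapsto a\tensor be$ and then extract well-definedness, isometry, and surjectivity all from the single Haar-invariance identity $(g\tensor\Id)\coproduct{}(c)=g(c)1$. This is closer in spirit to the paper's Lemma~\ref{lem:special.module.coaction.case} (to which the paper itself points as the ``more constructive and explicit'' version), though even that lemma proceeds via an orthonormal basis rather than via the density of $\coproduct{}(A)(1\tensor A)$. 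The payoff of your approach is that it is elementary and makes transparent exactly where Haar invariance enters; the payoff of the paper's approach is brevity and the immediate connection to the multiplicative-unitary formalism, which you recover only at the end as a remark. One small point: the invariance you use, $(g\tensor\Id)\coproduct{}=g(\cdot)1$, is what the paper calls right invariance; since the Haar state on a compact quantum group is bi-invariant this is harmless, but it would be worth saying so explicitly given that the hypothesis names $g$ as the \emph{left} Haar state.
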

\begin{proof}Recall that $A\tensor A\subseteq A \tensor \H,$ as subsets. (This inclusion is not an inclusion of Hilbert modules because $A$ acts differently on the two sides of the inclusion.) From \cite[Prop. 3.6]{BS} or from \cite[Prop 3.21]{KustermansVaes} a multiplicative unitary is in fact an element of $\Mult(A\tensor\compact(\H)).$  Thus it multiplies $A\tensor \H$ into itself.  Then we have a well-defined   $\C$-linear map $V\colon A\tensor A \arrow A\tensor\H$ given by  $x\mapsto W^{*} x(1\tensor e),$ where $W$ is the above multiplicative unitary, and $eE$ is the image $e:=\pi_{\haar}1$ of $1$ in $\H.$ The property $V(\coproduct{}(a)x)=aV(x)$ is clear.
\end{proof}
 See Lemma \ref{lem:special.module.coaction.case} on page \pageref{lem:special.module.coaction.case} for a more constructive and explicit proof of the above.

\begin{definition} If $H_1$ and $H_2$ are Hilbert sub-modules of $A$, we denote by $H_1\boxproduct H_2$ the Hilbert sub-module of $A\tensor \H$ obtained by closing
$V(H_1\tensor H_2)$ in the Hilbert module norm on $A\tensor\H.$
\label{def:boxproduct}\end{definition}

An operator $V$ having the property $V(\coproduct{}(a)x)=aV(x)$ will be referred to as having the \textit{skew-linear} property. The skew-linear property implies ordinary linearity over $\C,$ because the coproduct $\coproduct{}$ is a unital homomorphism and thus maps $\lambda\Id_A$ to $\lambda\Id_{A\tensor A}.$

 We will see that there exists therefore a product on the Cuntz semigroup.

In the  case of stable rank 1, which is all we need here, we may as well define the Cuntz semigroup of a \textsl{C*}-algebra as being given by the isomorphism classes of the countably generated Hilbert modules over that algebra, together with direct sum as a semigroup operation, and inclusion as a relation giving an order structure on that semigroup \cite{CEI}. See \cite{CEI} for the adaptations that would be needed in the case of higher stable rank.

We now verify that the above definition \ref{def:boxproduct} does respect the equivalence relation on Hilbert modules in the Cuntz semigroup.
\begin{theorem} In a separable, nuclear, unital, and  stable rank 1 \textsl{C*}-algebraic quantum group $A,$ if $H_1,$ $H_2$ and $H_3$ are Hilbert sub-modules of $A$, and $H_1$ is isomorphic to $H_2,$ then $H_1\boxproduct H_3$ is isomorphic   to  $H_2\boxproduct H_3.$
\label{th:invariant}
\end{theorem}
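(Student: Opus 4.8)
The plan is to build the required isomorphism directly from an isomorphism in the first slot and to verify that it is a Hilbert module unitary by means of the faithful-state criterion of Corollary~\ref{th:equivalences.of.maps}. Since $H_1$ is isomorphic to $H_2$, there is a Hilbert module unitary $u\colon H_1\arrow H_2$; because $H_1,H_2$ are submodules of $A$ with inner product $\ip{a}{b}=a^*b$, the map $u$ is right $A$-linear and preserves the $A$-valued inner product exactly, that is $(ux)^*(uy)=x^*y$ for all $x,y\in H_1$. I would then define a map $T_0$ on the algebraic span of $V(H_1\tensor H_3)$ by $T_0\bigl(V\xi\bigr)=V\bigl((u\tensor\Id)\xi\bigr)$ for $\xi\in H_1\tensor H_3\subseteq A\tensor A$, the aim being to show that $T_0$ extends to a Hilbert module unitary from $H_1\boxproduct H_3$ onto $H_2\boxproduct H_3$.

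The heart of the argument is a single inner-product identity. By Lemma~\ref{lem:special.module.coproduct.case} together with its geometric interpretation, $V$ extends to an ordinary unitary from the enveloping Hilbert space of $A\tensor A$ with respect to $g\tensor g$ onto that of $A\tensor\H$ with respect to $g$; unwinding the two enveloping inner products, this says precisely that
\[ g\bigl(\ip{V\xi}{V\eta}_{A\tensor\H}\bigr)=(g\tensor g)\bigl(\xi^*\eta\bigr) \]
for all $\xi,\eta\in A\tensor A$. Applying this with $(u\tensor\Id)\xi$ and $(u\tensor\Id)\eta$, and then using that $u$ preserves the $A$-valued inner product so that $((u\tensor\Id)\xi)^*((u\tensor\Id)\eta)=\xi^*\eta$ on the algebraic span of $H_1\tensor H_3$, I obtain
\[ g\bigl(\ip{T_0 V\xi}{T_0 V\eta}_{A\tensor\H}\bigr)=(g\tensor g)\bigl(\xi^*\eta\bigr)=g\bigl(\ip{V\xi}{V\eta}_{A\tensor\H}\bigr). \]

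This one computation does most of the work. Taking $\xi=\eta$ and using faithfulness of $g$ shows that $V\xi=0$ forces $T_0(V\xi)=0$, so $T_0$ is well defined on the algebraic span; the same identity is exactly condition (4) of Corollary~\ref{th:equivalences.of.maps} applied to $T_0$. For $A$-linearity I would check that the right $A$-action on $A\tensor\H$ pulls back through $V$ to right multiplication on the first tensor factor, whence right linearity of $u$ gives $T_0\bigl((V\xi)\cdot a\bigr)=T_0(V\xi)\cdot a$. Surjectivity of $u$ makes the range of $T_0$ the algebraic span of $V(H_2\tensor H_3)$, which is dense in $H_2\boxproduct H_3$. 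Corollary~\ref{th:equivalences.of.maps} then upgrades $T_0$ to a Hilbert module unitary, giving $H_1\boxproduct H_3\isom H_2\boxproduct H_3$.

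I expect the main obstacle to be bookkeeping rather than any deep difficulty: one must correctly match the two enveloping GNS inner products in the displayed identity against the Hilbert-module-inner-product-composed-with-state on each side, and one must keep straight the interaction between the \emph{skew}-linear property $V(\coproduct{}(a)x)=aV(x)$, which involves a left action, and the right module structure on $A\tensor\H$ with respect to which each $H_i\boxproduct H_3$ is a Hilbert module. Getting the latter right is what makes the $A$-linearity of $T_0$ reduce cleanly to the right $A$-linearity of $u$.
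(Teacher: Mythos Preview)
Your proposal is correct and follows the same route as the paper: you form $T_0=V\circ(u\tensor\Id)\circ V^{-1}$, which is exactly the paper's $J=T\circ F\circ T^{-1}$ with $F=u\tensor\Id$, and then invoke the faithful-state criterion (the paper's Proposition~\ref{prop:2isometries.want.to.be.isomorphisms} and Theorem~\ref{th:equivalences.of.maps}, packaged in Corollary~\ref{th:equivalences.of.maps}) to upgrade to a Hilbert module unitary after passing to closures. Your explicit verification of the identity $g(\ip{V\xi}{V\eta}_{A\tensor\H})=(g\tensor g)(\xi^*\eta)$ and your handling of $A$-linearity via the pullback of the right $A$-action through $V$ (which indeed yields right multiplication by $a\tensor 1$, since $W^*\in\mathcal L(A\tensor\H)$ is automatically right $A$-linear) are, if anything, a bit more careful than the paper's appeal to the skew-linear property, which concerns a \emph{left} action; the substance of the argument is the same.
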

\begin{proof}  We consider $W:=T(H_1\tensor H_3)$ which is a pre-Hilbert sub-module of $A\tensor\H$, and  $W':=T(H_2\tensor H_3),$ a pre-Hilbert sub-module of $A\tensor\H.$   We have, by hypothesis, that
$H_1\tensor H_3$ is isomorphic as a Hilbert $(A\tensor A)$-module to $H_2\tensor H_3.$ Denoting this isomorphism by $F\colon H_1\tensor H_3\arrow H_2\tensor H_3,$ we compose this isomorphism with the skew-linear mappings $T\inv$ and $T$ of Lemma \ref{lem:special.module.coproduct.case},  obtaining $J:=T\compose F\compose T\inv\colon W\arrow W'.$

 Thus, the map $J$ is, or more accurately extends to, a Hilbert space unitary. It is a module map by the skew-linear property of $T$ and the $(A\tensor A$-linearity of $F.$  By Proposition \ref{prop:2isometries.want.to.be.isomorphisms},  the map $J$ is isometric with respect to the norm coming from the enveloping Hilbert module $A\tensor\H$.  If we take the closure of  ${T(H_1\tensor H_3 )}$ and  ${T(H_1\tensor H_3 )}$ with respect to the Hilbert module norm on  $A\tensor\H,$ the fact that $J$ is an isometry allows us to extend $J$ by continuity to a bounded mapping, indeed, an isometry of the closures. We now have an isometry $J\colon\overline{T(H_1\tensor H_2 )} \arrow  \overline{T(H_1\tensor H_3 )}.$   Theorem \ref{th:equivalences.of.maps} then gives a Hilbert module isomorphism of these Hilbert $A$-modules.
\end{proof} \label{section.that.the.product.is.in}

\begin{corollary} There is an associative product $\boxproduct$ on the Cuntz semigroup of a stable rank 1 separable compact \textsl{C*}-algebraic quantum group. \label{cor:extend.product}\end{corollary}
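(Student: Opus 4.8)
The plan is to combine Theorem~\ref{th:invariant} with the coassociativity of the coproduct. First I would upgrade Theorem~\ref{th:invariant}, which records invariance of $H_1\boxproduct H_3$ under isomorphisms of the first factor, to invariance in both variables: running the same argument with the $(A\tensor A)$-module isomorphism acting on the second tensor leg rather than the first, and again invoking Proposition~\ref{prop:2isometries.want.to.be.isomorphisms} and Theorem~\ref{th:equivalences.of.maps}, yields $H_3\boxproduct H_1\isom H_3\boxproduct H_2$ whenever $H_1\isom H_2$. Since $\H$ is separable, $A\tensor\H$ is the standard Hilbert module $\H_A$, and each $H_1\boxproduct H_2$ is the closed image of a countably generated module inside $\H_A$, hence itself a countably generated Hilbert $A$-module representing a well-defined class in the Cuntz semigroup; the two-variable invariance then shows $\boxproduct$ descends to a binary operation on those classes.

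For associativity I would exploit that the skew-linear property $V(\coproduct{}(a)x)=aV(x)$ is the module-level expression of coassociativity $(\coproduct{}\tensor\iota)\coproduct{}=(\iota\tensor\coproduct{})\coproduct{}$, equivalently of the pentagon equation $W_{12}W_{13}W_{23}=W_{23}W_{12}$ for the multiplicative unitary. Concretely, I would introduce a single three-fold map $V^{(2)}\colon A\tensor A\tensor A\arrow A\tensor\H\tensor\H$, built from two applications of the multiplicative unitary and obeying $V^{(2)}\bigl((\iota\tensor\coproduct{})\coproduct{}(a)\,x\bigr)=aV^{(2)}(x)$, and set $H_1\boxproduct H_2\boxproduct H_3:=\overline{V^{(2)}(H_1\tensor H_2\tensor H_3)}$. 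The goal is then to identify each of the two bracketings $(H_1\boxproduct H_2)\boxproduct H_3$ and $H_1\boxproduct(H_2\boxproduct H_3)$ with this common three-fold module, up to Hilbert module isomorphism. In each case the comparison map is skew-linear and extends to a Hilbert space unitary on the enveloping GNS spaces, so Proposition~\ref{prop:2isometries.want.to.be.isomorphisms} makes it a Hilbert module norm isometry and Theorem~\ref{th:equivalences.of.maps} promotes it to a Hilbert module isomorphism, exactly as in the proof of Theorem~\ref{th:invariant}.

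The main obstacle is that $\boxproduct$ sends a pair of submodules of $A$ to a submodule of $A\tensor\H$ rather than of $A$, so to iterate it I must first extend Definition~\ref{def:boxproduct} to accept a submodule of $A\tensor\H$ in one slot and check that this extension is compatible with the two-variable invariance established above. I expect the decisive computation to be that the pentagon equation intertwines the two factorizations of $V^{(2)}$ through the single-step maps $V$, so that the two composite comparison maps are genuinely unitary on the enveloping Hilbert spaces rather than merely bounded. Granting this intertwining, both bracketings define isomorphic countably generated Hilbert $A$-modules, hence equal elements of the Cuntz semigroup, and associativity of $\boxproduct$ follows.
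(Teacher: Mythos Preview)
Your proposal is correct and matches the paper's strategy: extend $\boxproduct$ from submodules of $A$ to all countably generated Hilbert $A$-modules, then deduce associativity from coassociativity of the coproduct. For the extension step you flag as the main obstacle, the paper dispatches it in one line by tensoring both domain and range of $V$ with a copy of $\H$ and invoking $\H\tensor\H\isom\H$ so that the argument of Theorem~\ref{th:invariant} runs unchanged on submodules of $\H_A$; your pentagon-equation treatment of associativity via a three-fold map $V^{(2)}$ is a reasonable unpacking of what the paper compresses into the single clause ``associativity follows from co-associativity of the co-product.''
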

\begin{proof} Our  proof above   only considered Hilbert C*-modules that are sub-modules of $A.$ Tensoring both the range and domain spaces of the unitary $V$ by a copy of the classic Hilbert space $\H,$ and using the fact that $\H\tensor\H\isom\H,$ extends the product operation to sub-modules of $\H_A.$ Then, just as in theorem \ref{th:invariant} we have that the operation $\boxproduct$ is well-defined up to isomorphism.

 Associativity with respect to multiplication follows  from co-associativity of the co-product.
\end{proof}

\section{Computing the product}

Compact \textsl{C*}-algebraic quantum groups have a well-behaved Fourier transform, which can be most briefly defined  by, following  Van Daele \cite{VanDaele1994}, see also  \cite{kahng}:
$$\beta(a,\F(b))=\haar(ab),$$ where  the elements $a$ and $b$ belong to a \textsl{C*}-algebraic quantum group $A,$ $\haar$ is the left Haar weight,  and $\beta(\cdot\,,\cdot)$ is the pairing with the dual algebra.
Following \cite[Def. 3.10]{kahng}, an operator-valued convolution product, $\convolve,$ can then be defined by the  property $\F(a\convolve b)=\F(a)\F(b),$ where $a$ and $b$ are elements of a \textsl{C*}-algebraic quantum group $A,$ and $\F$ is the Fourier transform defined previously.

This convolution operation takes, as is well-known, a pair of positive operators to a positive operator \cite[Theorem 1.3.3.i]{EnockSchwartz}.  One could wonder if the product defined in the previous section could, at the level of generators, be viewed as some kind of generalized convolution. We now show that this is indeed the case, at least under the technical condition, related to Kac algebras
\footnote{Conventional transliteration of Cyrillic suggests the spelling Kats algebra, however, the spelling Kac has become standard.},
  of having a tracial Haar state.

 We now assume tracial Haar state for the next Proposition only.  In the statement of the next Proposition, $\haar$ denotes the extension of the Haar state to the canonical trace related to the GNS Hilbert space associated with the Haar state.

In Theorem \ref{th:invariant}, if we take the product of two Hilbert modules $\overline{\ell_1 A}$ and $\overline{\ell_2 A},$ the product module will be some submodule of the standard Hilbert module $ H_A.$ By Cohen's theorem \cite{cohen} we expect this submodule to be singly generated as a Hilbert module, and we may ask if there is some nice description of some particular choice of that generator.

\begin{proposition} Let $A$ be a compact \textsl{C*}-algebraic quantum group with faithful Haar state.
Let $\ell_1$ and $\ell_2$ be in $A^{+}.$

The product module  $[\ell_1]\boxproduct[\ell_2]$ from Theorem \ref{th:invariant} has a (single) generator of the form
$$V(\ell_1\tensor \ell_2)V\inv\colon A\tensor\H\arrow A\tensor\H.$$

 If the Haar state is tracial,  $(\Id\tensor t) (\ell_1\boxproduct \ell_2)=\ell_1 \convolve \ell_2$ for all  $\ell_i\in A^{+},$ where  $t$ is the usual trace on $\compact.$\label{prop:box.and.convolve}
\end{proposition}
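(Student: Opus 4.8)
The plan is to treat the two assertions separately, the first being essentially a book-keeping exercise with the map of Lemma \ref{lem:special.module.coproduct.case}, while the second will require the tracial hypothesis in an essential way. For the first assertion I would unwind Definition \ref{def:boxproduct}. Writing $H_i=\overline{\ell_i A}$ and recalling that the skew-linear map $T\colon A\tensor A\arrow A\tensor\H$ is $x\mapsto W^{*}x(1\tensor e)$, I would first note that $H_1\tensor H_2=\overline{(\ell_1\tensor\ell_2)(A\tensor A)}$ as a submodule of $A\tensor A$. Applying $T$ gives $T\big((\ell_1\tensor\ell_2)y\big)=W^{*}(\ell_1\tensor\ell_2)\,y(1\tensor e)$, and since the vectors $y(1\tensor e)$ with $y\in A\tensor A$ are dense in $A\tensor\H$ (because $Ae$ is dense in $\H$), the closure of the image is exactly $\overline{W^{*}(\ell_1\tensor\ell_2)(A\tensor\H)}$. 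As $W$ is unitary, $\overline{W^{*}(\ell_1\tensor\ell_2)(A\tensor\H)}=\overline{W^{*}(\ell_1\tensor\ell_2)W(A\tensor\H)}$, so the single positive operator $V(\ell_1\tensor\ell_2)V\inv$ (with $V=W^{*}$) has range-closure equal to $[\ell_1]\boxproduct[\ell_2]$; this is the asserted generator, the positivity of $\ell_1\tensor\ell_2$ and the unitarity of $V$ being what make it a positive operator as required for a Cuntz representative.

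For the second assertion I would reduce to testing against the faithful Haar state: since $\haar$ is faithful it suffices to show $\haar\big(c\,(\Id\tensor t)(V(\ell_1\tensor\ell_2)V\inv)\big)=\haar\big(c\,(\ell_1\convolve\ell_2)\big)$ for every $c\in A$. The right-hand side I would rewrite using the Fourier transform: from $\beta(a,\F(b))=\haar(ab)$ and $\F(\ell_1\convolve\ell_2)=\F(\ell_1)\F(\ell_2)$, together with the fact that the product in the dual is transpose to $\coproduct{}$, one obtains the characterisation $\haar\big(c\,(\ell_1\convolve\ell_2)\big)=(\haar\tensor\haar)\big(\coproduct{}(c)(\ell_1\tensor\ell_2)\big)$. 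This is the target on the convolution side.

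The left-hand side I would open up as $(\haar\tensor t)\big((c\tensor1)V(\ell_1\tensor\ell_2)V\inv\big)$, and here the tracial hypothesis enters twice. First, because $\haar$ is tracial the functional $\haar\tensor t$ is a trace, so I may move $V\inv$ cyclically to the front and form $V\inv(c\tensor1)V$, which the coproduct formula $\coproduct{}(c)=V(c\tensor1)V\inv$ identifies, up to the leg conventions of the chosen multiplicative unitary, with $\coproduct{}(c)$. Second, as recorded in the remark preceding the Proposition, in the tracial case $\haar$ extends to the canonical trace of the GNS space, so that $t$ agrees with $\haar$ on the image of $A$; this lets me replace the second-leg trace $t$ by $\haar$ and land on $(\haar\tensor\haar)\big(\coproduct{}(c)(\ell_1\tensor\ell_2)\big)$, matching the convolution side. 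The same computation can be done concretely by expanding $(\haar\tensor t)(\cdots)=\sum_n\ip{e\tensor\xi_n}{(c\tensor1)V(\ell_1\tensor\ell_2)V\inv(e\tensor\xi_n)}$ over an orthonormal basis $\{\xi_n\}$ of $\H$ and using the explicit action $V(ae\tensor\eta)=\coproduct{}(a)(e\tensor\eta)$.

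I expect the main obstacle to be precisely the appearance of the trace $t$ on $\compact$: for infinite-dimensional $\H$ the naive operator trace of $V(\ell_1\tensor\ell_2)V\inv$ diverges (already $\ell_1=\ell_2=1$ gives the identity on the second leg), so the content of the statement is that in the tracial (Kac) case $t$ must be read as the canonical finite trace extending $\haar$ on the standard form, for which the partial trace is finite and reproduces the Haar state on left multiplications. Establishing this identification, together with the normality and convergence justifications needed to rearrange the a priori unbounded factors cyclically, is the crux; by contrast the combinatorial matching with $\coproduct{}$ is routine once the correct leg convention for $V$ is fixed, a convention whose correctness can be sanity-checked on the commutative example $A=C(G)$, where $\boxproduct$ must reduce to ordinary convolution of functions on $G$.
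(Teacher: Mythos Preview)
Your proposal is correct and follows essentially the same route as the paper's (outline) proof: for the first assertion both arguments reduce the box product to the range closure of $V(\ell_1\tensor\ell_2)V^{-1}$ via the density $V^{-1}(A\tensor\H)=A\tensor A$, and for the second both test against the Haar state, use that $\haar\tensor t$ (equivalently $t\tensor t$) is a trace to cycle $V^{-1}$ to the front, and then invoke the skew-linear property together with the definition of the Fourier transform to land on $\F(\ell_1)\F(\ell_2)$. One small correction to make when you write this up: the identity you actually need after cycling is $V^{-1}(c\tensor1)V=\coproduct{}(c)$, and this follows \emph{directly} from the skew-linear relation $V\coproduct{}(a)=(a\tensor1)V$ of Lemma~\ref{lem:special.module.coproduct.case}, not from the Baaj--Skandalis formula $\coproduct{}(c)=V(c\tensor1)V^{-1}$ you quote (that formula is for a different $V$); your ``leg conventions'' caveat anticipates this, but it is cleaner to cite the skew-linear property outright, as the paper does.
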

\begin{proof} (Outline) Since ${\ell_1 A} \boxproduct {\ell_2 A}$ is defined to be the closure of $V(\ell_1\tensor\ell_2)(A\tensor A),$ and since $V\inv (A\tensor\H)=A\tensor A$, we have that  ${\ell_1} \boxproduct {\ell_2}$ is the Hilbert module closure of the pre-Hilbert module $V(\ell_1\tensor\ell_2)V\inv (A\tensor\H).$

It is known \cite{fima} that we can find a representation where the tracial Haar state coincides with the usual (unbounded) trace on $\BH. $
Restricting $V(\ell_1\tensor \ell_2)V\inv$ to a map from $A\tensor\H$ to $A\tensor \H,$ and applying  $\Id\tensor t,$ we will consider the Fourier transform of $C:=(\Id\tensor t)V(\ell_1\tensor \ell_2)V\inv.$
We next note that  $\F(C)=\F(\ell_1)\F(\ell_2).$ This is a routine calculation using the definition of the Fourier transform in terms of the tracial Haar state, the fact that $t\tensor t$ is a trace, and the skew-linear property of $V.$

Thus we have shown that $t(aC)=\beta(a,\F(\ell_1)\F(\ell_2)),$ and since $t(aC)$ is equal to $\beta(a,\F(C)),$ the nondegeneracy of the pairing implies that $\F(C)=\F(\ell_1)\F(\ell_2).$ Thus, $C=\ell_1 \convolve \ell_2,$ where $C=(\Id\tensor t) (\ell_1\boxproduct \ell_2).$
\end{proof}

\section{Implications for K-theory}

In the case of stable rank one, everything comes together very nicely, as follows.
Clearly,  we can consider the equivalence classes of the projective modules within the Cuntz semigroup. It is known that equivalence of projective modules in the Cuntz semigroup is isomorphic to the ordinary equivalence of operator projections,  in matrix algebras over $A$. We note that by the first part of Proposition \ref{prop:box.and.convolve},  the product $\boxproduct$ will in fact take a pair of operator projections to a projection.
 Thus, the product $\boxproduct$ gives a product on  the semigroup of equivalence classes of projections, $V(A).$

 Stable rank one implies cancellation \cite[Prop. 6.5.1]{blackadar} in this semigroup of projections. K-theory is defined, in the unital case, as the enveloping  Grothendieck group of this semigroup $V(A)$ of projections.
Products on semigroups do not always give products on the enveloping Grothendieck group, but will do so exactly in the case that the semigroup is cancellative \cite{Dale}.
\begin{corollary} In a unital and separable stable rank 1 \textsl{C*}-algebraic quantum group, the product $\boxproduct$ provides a product on the K-group  $K_0 (A)$.
\end{corollary}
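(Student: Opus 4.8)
The plan is to push the product $\boxproduct$ through the Grothendieck construction, the only genuine issue being whether it descends to formal differences of classes. I would organize the argument in three stages, the first two preparing the algebraic data and the third invoking cancellation.

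First I would record that we already possess a product on the monoid $V(A)$ of equivalence classes of projections. Indeed, by Corollary \ref{cor:extend.product} the operation $\boxproduct$ is defined on the Cuntz semigroup, and by the first assertion of Proposition \ref{prop:box.and.convolve} it carries a pair of projections to a projection; Theorem \ref{th:invariant} guarantees that the outcome is independent of the chosen module representatives up to isomorphism, hence up to equivalence in $V(A)$. So $\boxproduct$ restricts to a well-defined binary operation on $V(A)$.

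Second I would verify biadditivity, i.e. that $\boxproduct$ distributes over the direct-sum operation $\oplus$ furnishing the addition of $V(A)$. Here the $\C$-linearity of the map $V$ of Lemma \ref{lem:special.module.coproduct.case} does the work: since $(H_1\oplus H_1')\tensor H_2 \isom (H_1\tensor H_2)\oplus(H_1'\tensor H_2)$ as Hilbert $(A\tensor A)$-modules and $V$ is linear, the closure of $V\bigl((H_1\oplus H_1')\tensor H_2\bigr)$ is isomorphic, as a Hilbert $A$-module, to the direct sum of the closures of $V(H_1\tensor H_2)$ and $V(H_1'\tensor H_2)$. Thus $(p\oplus p')\boxproduct r \isom (p\boxproduct r)\oplus(p'\boxproduct r)$ in $V(A)$, and symmetrically in the second variable.

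Third I would invoke cancellation. Stable rank one makes $V(A)$ a cancellative abelian monoid \cite[Prop. 6.5.1]{blackadar}, so the canonical map $V(A)\arrow K_0(A)$ into its enveloping Grothendieck group is injective. A biadditive operation on a cancellative abelian monoid extends uniquely to a biadditive operation on the Grothendieck group \cite{Dale}, via $([p]-[q])\boxproduct([r]-[s]) := [p\boxproduct r]+[q\boxproduct s]-[p\boxproduct s]-[q\boxproduct r]$, cancellativity being exactly what makes this independent of the chosen representations as differences. This yields the desired product on $K_0(A)$. The main obstacle I anticipate is the second stage: making biadditivity honest requires producing a genuine Hilbert-module isomorphism between $\overline{V((H_1\oplus H_1')\tensor H_2)}$ and the direct sum of the two pieces, rather than merely an abstract coincidence of dimensions, which is where Proposition \ref{prop:2isometries.want.to.be.isomorphisms} and Theorem \ref{th:invariant} must be applied with care. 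Once biadditivity is secured, the descent to $K_0(A)$ is the routine universal-property argument made available by cancellativity.
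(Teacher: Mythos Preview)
Your argument is correct and follows essentially the same route as the paper: establish that $\boxproduct$ restricts to a product on $V(A)$ via Proposition~\ref{prop:box.and.convolve}, invoke stable rank one for cancellation via \cite[Prop.~6.5.1]{blackadar}, and then apply \cite{Dale} to extend the product to the Grothendieck group $K_0(A)$. Your second stage, the explicit verification of biadditivity, is actually more careful than the paper, which simply cites \cite{Dale} without checking that the semiring hypothesis (distributivity of $\boxproduct$ over $\oplus$) is met; your observation that this follows from $\C$-linearity of $V$ together with the tensor distributivity $(H_1\oplus H_1')\tensor H_2\isom(H_1\tensor H_2)\oplus(H_1'\tensor H_2)$ fills that small gap.
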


\begin{example} Recall that elements of $K_0$ are by definition formal differences of projections. We can use  K-theoretical techniques to reduce to the case where the element subtracted is of the form $[I_n]$ in some matrix algebra $M_n(A)$. Then, we can simplify our product operation to show that
\begin{equation}([p_1]-[I_n])\boxproduct([p_2]-[I_m])=[p_1\boxproduct p_2]-[I_k].
\label{eq:naive.product}\end{equation}
\end{example}
In fact, equation \eqref{eq:naive.product} seems to let us define a product on K-theory even without stable rank one.

The existence of the product then can be used to generalize our earlier results for the discrete case \cite{kuce.classify.Hopf.by.Ktheory}, with a real rank zero condition and the existence of the product replacing discreteness.

\section{Products and  extensions}

The topic of Hopf ideals is mostly of interest in the finite-dimensional case, but we attempt to state the definitions and first result more generally. A Hopf ideal of a Hopf algebra is an algebra ideal in the kernel of the co-unit that is also a co-algebra co-ideal and is stable under the antipode. Equivalently, a Hopf ideal is the intersection of the kernel of the co-unit with a normal and unital Hopf subalgebra. We make the assumption that the antipode is bijective, and hence that left normality equals right normality.

We also point out that in  general,  we may  obtain from our product construction two  possibly inequivalent product structures, one associated with the left Haar state and the other associated with the right Haar state.

\begin{theorem} Let $B$ be a \textsl{C*}-algebraic quantum group with faithful Haar state, separable, nuclear, and of stable rank 1 as a C*-algebra. Let $N$ be a normal Hopf sub-C*-algebra, and let $N^{+}$ denote its associated Hopf ideal. Then there is a short exact sequence
\begin{diagram}[small] 0&\rTo&\Cu  (N^{+})& \rTo & \Cu  (B) &\rTo & \Cu  (B/N^{+})&\rTo&0 \end{diagram}
where $\Cu  (B)$ is a semiring, and $\Cu  (N^{+})$ is an  ideal in that semiring. \label{th:ses}
\end{theorem}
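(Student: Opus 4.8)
The plan is to deduce the short exact sequence from the corresponding short exact sequence of C*-algebras, and to obtain the semiring and ideal structure by transporting the product $\boxproduct$ along the quotient map. First I would record that, since $N^{+}$ is a Hopf ideal, it is in particular a closed two-sided ideal of $B$, so that
$$0\arrow N^{+}\arrow B\overset{\pi}{\arrow} B/N^{+}\arrow 0$$
is a short exact sequence of C*-algebras; moreover $B/N^{+}$ is again a compact C*-algebraic quantum group with faithful Haar state, its coproduct being induced from that of $B$ by normality of $N$. Separability and nuclearity pass both to ideals and to quotients, and stable rank one does as well, so all three algebras lie in the class to which our earlier results apply. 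Applying the Cuntz semigroup functor and its standard exactness properties — an ideal inclusion induces an order-embedding, a surjection induces a surjection, and a class in $\Cu(B)$ maps to $0$ in $\Cu(B/N^{+})$ exactly when it is supported on the ideal — yields the exact sequence
$$0\arrow \Cu(N^{+})\arrow \Cu(B)\overset{\pi_{*}}{\arrow} \Cu(B/N^{+})\arrow 0,$$
and identifies the image of $\Cu(N^{+})$ with the kernel of $\pi_{*}$.

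The semiring structure on $\Cu(B)$ is exactly Corollary \ref{cor:extend.product}: the associative product $\boxproduct$ distributes over direct sum; the same corollary endows $\Cu(B/N^{+})$ with a product. The heart of the matter is then to check that $\pi_{*}$ is a homomorphism of semirings, i.e. that $\pi_{*}(x\boxproduct y)=\pi_{*}(x)\boxproduct\pi_{*}(y)$. Granting this, the ideal property is immediate: if $x\in\Cu(N^{+})=\ker\pi_{*}$ and $y\in\Cu(B)$, then $\pi_{*}(x\boxproduct y)=\pi_{*}(x)\boxproduct\pi_{*}(y)=0\boxproduct\pi_{*}(y)=0$, since the zero module is absorbing for $\boxproduct$; hence $x\boxproduct y\in\ker\pi_{*}=\Cu(N^{+})$, and symmetrically for $y\boxproduct x$. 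Thus $\Cu(N^{+})$ is a two-sided ideal of the semiring $\Cu(B)$, which together with the exact sequence above is the assertion of the theorem.

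To prove multiplicativity of $\pi_{*}$ I would unwind the definition of $\boxproduct$ through the skew-linear map $V$ of Lemma \ref{lem:special.module.coproduct.case}. Writing $V(x)=W^{*}x(1\tensor e)$ with $W$ the multiplicative unitary of $B$, the claim reduces to the naturality statement $(\pi\tensor\Id_{\H})\,V_{B}=V_{B/N^{+}}\,(\pi\tensor\pi)$, which follows once $\pi$ is shown to intertwine the two multiplicative unitaries. The latter rests on the compatibility of Haar states $g'\compose\pi=g$ for the quotient quantum group, so that $\pi$ induces a map of GNS Hilbert spaces carrying $e=\pi_{g}1$ to $e'=\pi_{g'}1$. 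This is the main obstacle, since the Haar state of $B/N^{+}$ is defined intrinsically and its agreement with the push-forward of $g$ must be extracted from uniqueness of the Haar state together with the fact that $\pi$ is a morphism of quantum groups.

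As an independent check on one side, I note that multiplicativity can be bypassed there by a direct argument avoiding the Haar-state compatibility. Since $W\in\Mult(B\tensor\compact(\H))$ and a multiplier preserves every closed two-sided ideal — factoring $j=j_{1}j_{2}$ within the ideal gives $W^{*}j=(W^{*}j_{1})j_{2}\in(B)(N^{+})\subseteq N^{+}$ — the operator $W^{*}$ preserves $N^{+}\tensor\H$. Hence if the first factor of a generator $V(\ell_{1}\tensor\ell)$ lies in $N^{+}$, then $V(\ell_{1}\tensor\ell)=W^{*}(\ell_{1}\tensor\ell\,e)\in N^{+}\tensor\H$, so that $x\boxproduct y$ is supported on $N^{+}$ whenever $x$ is; by Proposition \ref{prop:2isometries.want.to.be.isomorphisms} and Theorem \ref{th:equivalences.of.maps} this is an isomorphism-invariant statement, so it descends to $\Cu$. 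The opposite inclusion, where the ideal element sits in the second (Hilbert-space) slot, is precisely where the coideal property $\coproduct{}(N^{+})\subseteq N^{+}\tensor B + B\tensor N^{+}$ coming from normality of $N$ must enter, and it is most cleanly handled through the naturality of $\pi_{*}$ established above.
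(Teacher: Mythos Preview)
Your overall strategy mirrors the paper's: establish the \textsl{C*}-algebra short exact sequence, invoke the known exactness of the Cuntz semigroup functor, and deduce the ideal property from multiplicativity of $\Cu(\pi)$. The paper's proof is in fact briefer than yours --- it asserts multiplicativity of $\Cu(\pi)$ directly from the fact that the coproduct on $B/N^{+}$ is the one inherited from $B$ applied to cosets, without unwinding through the multiplicative unitary, and it defines the product on $\Cu(N^{+})$ simply as the one pulled back from $\Cu(B)$ via $\iota$.

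There is, however, a concrete error in your attempted justification of multiplicativity. You write that it rests on the compatibility $g'\compose\pi=g$ of Haar states. This equation cannot hold: $g$ is assumed faithful on $B$, so $g(x^{*}x)>0$ for any nonzero $x\in N^{+}$, whereas $g'(\pi(x^{*}x))=g'(0)=0$. No state on the quotient pulls back along $\pi$ to the faithful Haar state upstairs. Consequently the GNS spaces $L^{2}(B,g)$ and $L^{2}(B/N^{+},g')$ are not related by a map induced from $\pi$ in the way you suggest, and your naturality square $(\pi\tensor\Id_{\H})V_{B}=V_{B/N^{+}}(\pi\tensor\pi)$ does not even type-check as written, since the two Hilbert spaces $\H$ are different. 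Your direct multiplier argument for left absorption is sound and a nice addition, but the right-hand side, which you defer back to this naturality, inherits the gap. The paper does not attempt this level of detail: its argument stays at the level of coproducts on cosets and never passes through the Haar state, in effect taking for granted that the isomorphism class of $H_{1}\boxproduct H_{2}$ depends only on the coproduct homomorphism and not on the particular GNS realization used to build $V$.
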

\begin{proof} We note, first of all, that $N^{+}$ is a C*-algebra, and that there is a short exact sequence at the C*-algebra level,
\begin{diagram}[small] 0&\rTo&N^{+}& \rTo^\iota & B &\rTo^\pi & B/N^{+}&\rTo&0 \end{diagram}
The C*-algebraic exactness of the Cuntz semigroup functor was shown in \cite{ses}.
The connecting maps in the above sequence induce maps on Hilbert C*-modules that
are compatible with the Cuntz semigroup order relation, and that there is, at the level of ordered semigroups, a short exact sequence,
\begin{diagram}[LaTeXeqno] 0&\rTo&\Cu  (N^{+})& \rTo^{\Cu  (\iota)} & \Cu  (B) &\rTo^{\Cu  (\pi)} & \Cu  (B/N^{+})&\rTo&0. \label{diag:W}\end{diagram}

Since the co-product on $B/N^{+}$ is in fact the coproduct inherited from $B,$ applied to cosets rather than elements, it follows that $\Cu (\pi)$ is multiplicative. Thus, the kernel of $\Cu (\pi)$ is a semiring ideal (see \cite{Dale} for more information on semiring ideals) and an order ideal.
Since the sequence (\ref{diag:W}) is exact at the semigroup level, the image of the semigroup map $\Cu  (\iota)$ equals the kernel of $\Cu  (\pi).$
The image and the kernel do not change if we add multiplicative structure to the range and domain semigroups. In the semiring setting, if range equals kernel then we have exactness.\footnote{In the general case of exact sequences of semigroups, more needs to be checked, but we may as well use the additional structure at hand. In fact, the main way in which the semiring property is used in our proof is to conclude that a statement on range and kernel does imply exactness. 
} Thus, we have exactness, and it follows from this that the image of $\Cu  (\iota)$ is in fact an ideal in $\Cu  (B).$  Technically speaking, we have yet to define the product on  $\Cu  (N^{+}),$ since this was not previously discussed, but it is natural and appropriate to define it as the product pulled back from  $\Cu (B)$ by means of the map $\iota.$
\end{proof}

The main examples of Hopf ideals seem to be in the finite-dimensional case, and in this case, as is well-known, algebraic modules are automatically projective and the Cuntz semigroup in essence becomes $K$-theory. For example, see \cite{kuce.classify.Hopf.by.Ktheory0} for more information on the $K$-theory of finite-dimensional \textsl{C*}-algebraic quantum groups.
If we consider the finite-dimensional case, then:
 \begin{corollary} Let $B$ be a finite-dimensional \textsl{C*}-algebraic quantum group. Let $N$ be a normal Hopf sub-C*-algebra, and let $N^{+}$ denote its associated Hopf ideal. Then there is a short exact sequence
\begin{diagram} 0&\rTo&K_0 (N^{+})& \rTo & K_0 (B) &\rTo & K_0 (B/N^{+})&\rTo&0 \end{diagram}
where $K_0 (B)$ is an unital ring, and $K_0 (N^{+})$ is a ring ideal.
\end{corollary}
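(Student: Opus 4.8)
The plan is to specialize Theorem~\ref{th:ses} to the finite-dimensional setting and then to pass from Cuntz semigroups to $K$-theory by Grothendieck completion. First I would verify that a finite-dimensional \textsl{C*}-algebraic quantum group $B$ meets every hypothesis of Theorem~\ref{th:ses}: separability and nuclearity are automatic in finite dimensions, a finite-dimensional \textsl{C*}-algebra is a finite direct sum of matrix algebras and hence of stable rank~$1$, and a finite-dimensional quantum group is known to carry a faithful Haar state. With these in hand, Theorem~\ref{th:ses} supplies the short exact sequence of semirings
$$0\to\Cu(N^{+})\to\Cu(B)\to\Cu(B/N^{+})\to0,$$
in which $\Cu(N^{+})$ is a semiring ideal.

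Next I would use the fact that in finite dimensions every countably generated Hilbert module is finitely generated and projective, so that $\Cu(B)$ collapses to the Murray--von Neumann semigroup $V(B)$ of equivalence classes of projections, and similarly for $N^{+}$ and $B/N^{+}$. Because $B$ has stable rank~$1$, each of these semigroups is cancellative and hence embeds in its Grothendieck group, which is by definition $K_0$. Concretely, writing $B=\bigoplus_{i=1}^{k}\M{n_i}$, an ideal of $B$ is the sum of a sub-collection of the blocks, so the three semigroups are free, namely $V(N^{+})=\N^{s}$, $V(B)=\N^{k}$ and $V(B/N^{+})=\N^{k-s}$, and the displayed sequence is the obvious split short exact sequence of free monoids.

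I would then Grothendieck-complete. The point is that completion carries this split exact sequence of free cancellative monoids to the split exact sequence of free abelian groups $0\to\Z^{s}\to\Z^{k}\to\Z^{k-s}\to0$; equivalently one may read off exactness from the standard six-term sequence in $K$-theory of the \textsl{C*}-extension $0\to N^{+}\to B\to B/N^{+}\to0$, using that $K_1$ of a finite-dimensional \textsl{C*}-algebra vanishes, so the connecting maps are zero. For the multiplicative structure, cancellativity lets the product $\boxproduct$ descend from $V(B)$ to $K_0(B)$ by our earlier corollary, making $K_0(B)$ a ring; it is unital, the multiplicative identity being the class of the unit for the convolution product, dual to the co-unit $\counit$. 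Finally, a semiring ideal completes to a ring ideal and a multiplicative surjection completes to a ring homomorphism, so the ideal property of $\Cu(N^{+})$ and the multiplicativity of the quotient map in Theorem~\ref{th:ses} descend to show that $K_0(N^{+})$ is a ring ideal in $K_0(B)$.

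The main obstacle is precisely the passage from the semiring exact sequence of Cuntz semigroups to the exact sequence of $K_0$ rings: Grothendieck completion is not exact for general abelian monoids, so one must genuinely exploit finite-dimensionality. Here that use is painless, since the relevant monoids are free and cancellative and the sequence is split, but it is exactly the step that would fail without the finite-dimensional hypothesis, and it is also where one must check that exactness, the ideal property, and the descended product all survive completion at once.
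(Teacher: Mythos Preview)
Your proposal is correct and follows the same strategy as the paper---specialize Theorem~\ref{th:ses} to finite dimensions and observe that the Cuntz semigroup essentially reduces to $K$-theory there---though you supply far more detail than the paper, which offers no proof beyond remarking that the result is ``possibly self-evident'' as a special case and that finitely generated modules over such algebras are automatically projective. One small slip worth noting: it is not literally true that every countably generated Hilbert module over a finite-dimensional $B$ is finitely generated (the standard module $\H_B$ is not), so $\Cu(B)$ does not coincide with $V(B)$ on the nose; however, your alternative justification of exactness via the six-term sequence and $K_1=0$ is sound and carries the argument without needing that identification.
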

The above is possibly self-evident, but it is interesting to  obtain it as a special case of  a more general result.

The reason why the above is possibly self-evident is as follows: since the product of Hilbert modules that we have defined was constructed to be an analytical version of the algebraist's operation of restriction of rings (by the co-product homomorphism), being moreover compatible with the inner products that a Hilbert module must necessarily have, one would expect that in the finite-dimensional case, and indeed in the AF case, the closure operation that appears in our definition of the product should have no real significance.

In these cases, we could presumably have worked entirely with algebraic modules and the algebraic restriction of rings operation, making use of the already mentioned fact that over such an algebra, finitely generated algebraic modules are automatically projective.

\section{Co-actions}

Suppose $(A,\varDelta)$ is a compact or discrete quantum group. A left
 coaction $\delta$ of $(A,\varDelta)$ on a C$^*$-algebra $B$
is a $*$-homomorphism $\delta\colon B\to M(A\otimes B)$ such that
$\delta (B)(A\otimes 1)$  is dense
in $A\otimes B$  and that
$(\Id\otimes\delta)\delta =(\varDelta\otimes\Id)\delta$.
For example, the coproduct homomorphism $\varDelta$ can be viewed as a left coaction of a compact quantum group $(A, \varDelta)$ on $A$
and $\widehat{\varDelta}$ as a right coaction of $(\widehat A,
\widehat{\varDelta})$ on $\hat A$.
Indeed, following \cite[pg. 8]{martinboundary}, using a multiplicative unitary $W$ we may by the formulas
$$
\Phi (x)=W^* (1\otimes x)W\ \ \ \hbox{and}\ \ \
\hat\Phi (a)=W(a\otimes 1)W^*,$$
for $x\in\hat A$ and $a\in A$, define a left coaction $\Phi$ of
$(A,\varDelta )$ on $\hat A$ and a right coaction $\hat\Phi$ of $(\hat
A,\hat\varDelta )$ on $A$.

 In this example, the co-action homomorphisms are injective, but that need not be the case in general.

An invariant state for a left co-action $\delta$ of a compact or discrete \textsl{C*}-algebraic quantum group $(A,\coproduct{} )$ on $B$ is a state $\eta$ on $B$ such that $(\Id\tensor\eta)\delta=\eta(\cdot)1.$ For the next lemma, we assume the existence of such a faithful invariant state, this is a strong but natural assumption.

\begin{lemma}Let $A$ be a separable, unital, nuclear \textsl{C*}-algebraic quantum group, with faithful right Haar state $f$. Let $B$ be a unital \textsl{C*}-algebra. Suppose that there is a left co-action $\delta\colon B\to A\otimes B$ with faithful invariant state $\eta.$   There exists a $\C$-linear map  $W\colon \H_B\rightarrow  {A\tensor B}$ such that
$W(bx)=\delta (b)W(x)$ for all $x\in \H_B$ and $b\in B.$  This map extends to an ordinary unitary on the enveloping Hilbert spaces with  respect to the faithful states $\eta\tensor f$ on $A\tensor B$ as a Hilbert module over itself, and $\eta$ on the Hilbert $B$-module $B\tensor\H.$
\label{lem:special.module.coaction.case}\end{lemma}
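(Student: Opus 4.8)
The plan is to supply the explicit construction promised in the remark after Lemma~\ref{lem:special.module.coproduct.case}: write $W$ down by a formula on a dense submodule, read off the skew-linear property from the fact that $\delta$ is a homomorphism, and then verify that $W$ is isometric and surjective at the level of the enveloping Hilbert spaces. The two structural hypotheses each do one job: invariance of $\eta$ powers the isometry, and the density axiom of a co-action powers the surjectivity.

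Set $\mathcal H_f:=L^2(A,f)$ and $\mathcal H_\eta:=L^2(B,\eta)$, with cyclic vectors $\xi_f:=\pi_f(1)$ and $\xi_\eta:=\pi_\eta(1)$. Since $A$ is separable I take the auxiliary space to be $\H=\mathcal H_f$, so that $A$ sits densely in $\H$ and $\H_B=B\otimes\mathcal H_f$. Because $A$ and $B$ are unital, $A\otimes B$ is unital and equals its own multiplier algebra, so $\delta$ maps into $A\otimes B$ and $\delta(b)(a\otimes 1)\in A\otimes B$. I define the $\C$-linear map $W$ on the dense submodule $B\odot A\subseteq\H_B$ by
$$ W\big(b\otimes\pi_f(a)\big):=\delta(b)(a\otimes 1)\in A\otimes B. $$
This is forced by the desired identity $W(bx)=\delta(b)W(x)$ together with the normalisation $W(1\otimes\pi_f(a))=a\otimes 1$, and conversely multiplicativity of $\delta$ gives $W(b'b\otimes\pi_f(a))=\delta(b'b)(a\otimes 1)=\delta(b')\,W(b\otimes\pi_f(a))$, so the skew-linear property holds exactly.

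For the isometry, the enveloping inner product sends the domain pair $\big(b\otimes\pi_f(a),\,b'\otimes\pi_f(a')\big)$ to $f(a^*a')\,\eta(b^*b')$, while on $A\otimes B$ as a module over itself with state $f\otimes\eta$ (the state written $\eta\otimes f$ in the statement) it sends $(\zeta,\zeta')$ to $(f\otimes\eta)(\zeta^*\zeta')$. Using that $\delta$ is a $*$-homomorphism and the slice identity $(f\otimes\eta)\big((a^*\otimes 1)\,z\,(a'\otimes 1)\big)=f\big(a^*(\Id\otimes\eta)(z)\,a'\big)$ for $z\in A\otimes B$, one finds
$$ (f\otimes\eta)\big(W(b\otimes\pi_f(a))^*\,W(b'\otimes\pi_f(a'))\big)=f\big(a^*\,(\Id\otimes\eta)\delta(b^*b')\,a'\big)=\eta(b^*b')\,f(a^*a'), $$
the last equality being exactly the invariance $(\Id\otimes\eta)\delta=\eta(\cdot)1$. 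Hence $W$ preserves the enveloping inner products and extends to an isometry $\overline W\colon\mathcal H_\eta\otimes\mathcal H_f\to\mathcal H_f\otimes\mathcal H_\eta$. For surjectivity I note that the range vectors are $\delta(b)(a\otimes 1)\,\Omega$, where $\Omega:=\xi_f\otimes\xi_\eta$ is cyclic for $A\otimes B$ acting by left multiplication, and that $\zeta\mapsto\zeta\Omega$ is norm-contractive; since the linear span of $\{\delta(b)(a\otimes 1)\}$ is dense in $A\otimes B$ by the co-action density axiom, the range of $\overline W$ is dense, so $\overline W$ is onto. This is the asserted extension to an ordinary unitary of the enveloping Hilbert spaces, and the case $B=A$, $\delta=\coproduct{}$ recovers Lemma~\ref{lem:special.module.coproduct.case}.

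I expect the surjectivity step to be the main obstacle: it is the only place where a genuine co-action axiom rather than bookkeeping enters, and it requires converting the abstract density of $\delta(B)(A\otimes 1)$ inside the C*-algebra $A\otimes B$ into density of the range inside the GNS space $\mathcal H_f\otimes\mathcal H_\eta$, which is precisely what cyclicity of $\Omega$ provides. Two remarks are worth recording. First, the construction never uses right-invariance of $f$; it needs only that $f$ is a faithful state (for the GNS embedding and cyclicity) and that $\eta$ is faithful and invariant, with nuclearity and separability entering only to make the tensor products and their GNS spaces unambiguous. Second, the same content repackages as an implementing unitary $\mathcal V$ for $\delta$, determined by $\mathcal V^*(\xi\otimes\pi_\eta(b))=\delta(b)(\xi\otimes\xi_\eta)$; building $W$ from $\mathcal V$ extends it to all of $\H_B$ and parallels the proof of Lemma~\ref{lem:special.module.coproduct.case} most closely.
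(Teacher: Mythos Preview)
Your proof is correct and follows essentially the same route as the paper's: define $W$ by $b\otimes a\mapsto\delta(b)(a\otimes 1)$ on a dense piece of $\H_B$, read off skew-linearity from multiplicativity of $\delta$, use invariance of $\eta$ for the inner-product preservation, and invoke the co-action density axiom together with cyclicity of the GNS vector for surjectivity. The only cosmetic difference is that the paper fixes an orthonormal basis $(g_i)$ of $L^2(A,f)$ obtained by Gram--Schmidt from a dense subset of $A$, writes elements of $\H_B$ as sequences $(b_i)$, and sets $W\big((b_i)\big)=\sum_i\delta(b_i)(g_i\otimes 1)$, whereas you work basis-free on simple tensors; the two formulations agree, and your observation that the state the paper writes as $\eta\otimes f$ is really $f\otimes\eta$ on $A\otimes B$ is also accurate.
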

\begin{proof} Let us first define a map $W\colon \H_B \rightarrow {A \tensor B},$
where  $\H_B$ is the standard Hilbert module over $B.$ We view the Hilbert module $\H_B:=B\tensor\H,$ as infinite sequences $(b_i)$ of elements of $B.$
Since $A$ has a faithful  state, we may embed $A$ in a Hilbert space  by the GNS construction. Taking a countable dense subset consisting of algebraic elements of $A$ and applying the Gram-Schmidt process at the Hilbert space level we obtain a countable basis $(g_i)$ of elements of $A$ for the Hilbert space.

Define a map $W$ from $\H_B$ into $A\tensor B$ by $W\colon (b_i)\mapsto \sum \delta(b_i)( g_i\tensor1 ).$ This map has dense domain because the domain contains at least the finite Hilbert modules $B\tensor\C^n.$ It is clear that the domain of this map is a $B$-module under the diagonal action of $B$ and that $W(bx)=\delta (b)W(x).$ The range is norm-dense in $A\tensor B$ because of the density property $\overline{\delta (B)(A\otimes 1)}=A\otimes B.$

 We now show that $W$ extends to a continuous map on the enveloping Hilbert spaces. Let the sequences $(b_i)$ and $(b'_j)$ denote  elements in $B\tensor\H.$ We recall that the inner product on $A\tensor B$ viewed as a Hilbert module over itself is $\ip{a}{b} =a^* b,$ and so if we take the inner product of $W(b_i)$ and $W(b'_j),$  we have
 \begin{equation} \begin{split}\ip{W((b_i))}{W((b'_j))}&= \left(\sum_{ij} (g_i^* \tensor1)\delta(b_i^*b'_j)(g_j\tensor1)\right)\\
\end{split}\label{eqn:basis}\end{equation}
where we used the fact that the coaction is a *-homomorphism, so that $\delta(b_i)^*\delta(b'_j)=\delta(b_i^*b'_j)$ .
	Applying  $\eta\tensor f$ to both sides of the above, and using the property $\eta(\delta(b_i^* b'_j))=\eta(b_i^* b'_j)$ we have:
	\begin{equation*}(\eta\tensor f)\ip{W((b_i))}{W((b'_j))}=\sum_{ij} \eta(b_i^*b'_j)f(g_{i}^* g_j).\end{equation*}
	It follows from the basis property of the $g_i$ that the right hand side simplifies to $\sum_{i} \eta(b_i^*b'_i)$ which can be rewritten as  $\eta$ applied to the usual inner product of $(b_i)$ and $(b'_j)$  in $\H_B,$ in other words, $\eta\left(\ip{(b_i)}{(b'_i)}_{B\tensor\H}\right).$
But then this shows that $(\eta\tensor f)\ip{W((b_i))}{W((b'_j))}=\eta\left(\ip{(b_i)}{(b'_i)}_{B\tensor\H}\right),$ which is the hypothesis needed to use Proposition \ref{prop:2isometries.want.to.be.isomorphisms}. Thus $W$ extends to an isometry on the enveloping Hilbert spaces, and since it will still have dense range  there, it is therefore a Hilbert space unitary.
\end{proof}

From the above lemma, we thus obtain a unitary $W\colon \H_B\rightarrow A\tensor B,$ at the level of Hilbert space structure, possessing a skew-linear property.
Tensoring on both sides with the classical Hilbert space \H, we can replace $A\tensor B$ with $\H_{A\tensor B}$. Just as in Theorem \ref{th:invariant}
we obtain, in the stable rank one case, a product $\boxproduct \colon \Cu (A) \times \Cu (B) \rightarrow \Cu(B).$

We can of course consider the special case where the co-action homomorphism is given by the co-product homomorphism, in other words, the case $A=B,$
and then we recover the theory discussed in section \ref{section.that.the.product.is.in}.
The associativity type property of the co-action, $$(\Id\otimes\delta)\delta =(\varDelta\otimes\Id)\delta,$$ implies that   the product induced in this way  from a co-product is compatible with the product induced from a co-action. In other words, products of the form $A_1 \boxproduct A_2 \boxproduct B_1,$ where $A_i$ is a Hilbert module over $A$ and $B_1$ is a Hilbert module over $B,$ are associative and we do not need to write parentheses. (c.f. Theorem \ref{cor:extend.product}.)
Rephrasing, what we have shown is that the Cuntz semigroup functor takes a co-action to a multiplicative action:
\begin{theorem} Let the \textsl{C*}-algebraic quantum group $A$ have a left co-action upon $B$ with a faithful invariant state. Then we obtain a well-behaved action of the semiring $\Cu(A)$ upon the semigroup $\Cu(B).$\end{theorem}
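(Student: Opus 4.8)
The plan is to build the action from the skew-linear unitary already supplied by Lemma \ref{lem:special.module.coaction.case} and then to check, one at a time, the two algebraic requirements packaged into the phrase \emph{well-behaved action}: that the semiring multiplication on $\Cu(A)$ acts by composition, and that the action distributes over the direct sum on $\Cu(B)$. First I would fix the unitary $W\colon\H_B\arrow A\tensor B$ of Lemma \ref{lem:special.module.coaction.case}, tensor on both sides by the classical Hilbert space $\H$ to replace $A\tensor B$ by the standard module $\H_{A\tensor B}$, and work with the inverse $W\inv\colon\H_{A\tensor B}\arrow\H_B$, which is skew-linear in the sense $W\inv(\delta(b)y)=bW\inv(y)$. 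For a Hilbert submodule $H_1$ of $A$ and a Hilbert submodule $H_2$ of $B$, I would then define the product $H_1\boxproduct H_2$ to be the closure of $W\inv(H_1\tensor H_2)$ inside $\H_B$. This is the exact analogue of Definition \ref{def:boxproduct}, now with $W\inv$ playing the role of $V$, and it produces a map $\boxproduct\colon\Cu(A)\times\Cu(B)\arrow\Cu(B)$ at the level of representatives.

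Next I would verify that $\boxproduct$ descends to isomorphism classes in each variable, which is a transcription of the proof of Theorem \ref{th:invariant}. Given an isomorphism of Hilbert modules in one of the two slots, the exterior tensor product with the identity on the other slot yields an $(A\tensor B)$-linear isomorphism $F$ of the relevant submodules of $A\tensor B$; conjugating, $J:=W\inv\compose F\compose W$ is a Hilbert space unitary between the two product modules. The identity $W\inv(\delta(b)y)=bW\inv(y)$, together with the fact that every $(A\tensor B)$-linear map automatically commutes with left multiplication by $\delta(b)$, shows that $J$ is $B$-linear. Proposition \ref{prop:2isometries.want.to.be.isomorphisms} then makes $J$ a Hilbert module isometry and Theorem \ref{th:equivalences.of.maps} upgrades it to a Hilbert $B$-module isomorphism, so the class of $H_1\boxproduct H_2$ depends only on the classes of $H_1$ and of $H_2$.

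I would then check the two defining properties of a well-behaved action. Distributivity is the quick point: since $W\inv$ is $\C$-linear and the exterior tensor product distributes over direct sums of Hilbert modules, $W\inv\bigl(H_1\tensor(H_2\oplus H_2')\bigr)$ closes up to $(H_1\boxproduct H_2)\oplus(H_1\boxproduct H_2')$, and symmetrically in the first variable, giving both distributivity laws. The substantive point is the action axiom $A_1\boxproduct(A_2\boxproduct B_1)\isom(A_1\boxproduct A_2)\boxproduct B_1$, relating the action to the semiring multiplication of Corollary \ref{cor:extend.product}. Here the coassociativity-type identity $(\Id\tensor\delta)\delta=(\coproduct{}\tensor\Id)\delta$ is exactly what identifies the two iterated constructions: applying $W\inv$ after the coproduct unitary $V$ of Lemma \ref{lem:special.module.coproduct.case} on one side, and applying $W\inv$ twice on the other, produce the same submodule of $\H_B$ up to a skew-linear Hilbert space unitary, which is once more promoted to a Hilbert module isomorphism by Theorem \ref{th:equivalences.of.maps}.

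I expect the action axiom to be the main obstacle, though the difficulty is bookkeeping rather than conceptual. One must pin down the domains and ranges of the two composite maps, confirm that the coaction identity turns the formal intertwiner between the two bracketings into a genuine (densely defined, then extended) skew-linear Hilbert space unitary rather than a mere algebraic intertwiner, and be careful that the states used to form the enveloping Hilbert spaces ($\eta\tensor f$ versus $\eta$, and their iterated versions) are matched consistently so that Proposition \ref{prop:2isometries.want.to.be.isomorphisms} applies at each stage. The well-definedness and distributivity steps, by contrast, are routine once the constructions of Lemma \ref{lem:special.module.coaction.case} and Theorem \ref{th:invariant} are in hand.
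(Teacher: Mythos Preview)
Your proposal is correct and follows essentially the same route as the paper: the paper's argument is the discussion immediately preceding the theorem, which invokes Lemma \ref{lem:special.module.coaction.case}, tensors by $\H$, appeals to the proof of Theorem \ref{th:invariant} for well-definedness of $\boxproduct\colon\Cu(A)\times\Cu(B)\to\Cu(B)$, and then uses the co-action identity $(\Id\otimes\delta)\delta=(\coproduct{}\otimes\Id)\delta$ for the associativity/action axiom. Your explicit use of $W^{-1}$ (so that $W^{-1}(\delta(b)y)=bW^{-1}(y)$ matches the skew-linearity direction of $V$ in Lemma \ref{lem:special.module.coproduct.case}) and your separate verification of distributivity make precise what the paper leaves implicit, but the strategy is the same.
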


From a desire to simply illustrate the ideas, we have given detailed proofs for the case of stable rank 1 only, but the method of \cite{DKAxiomsPreprint} removes  this restriction.
In fact, if we consider Cuntz semigroups without restricting to stable rank 1, this amounts to using an equivalence relation coarser than just isomorphism of Hilbert modules, and it can be shown that the product we have defined is in fact still well-defined with respect to this coarser equivalence relation. Thus we claim that the above Theorem holds for higher stable rank as well.

 We omit the details because Cuntz semigroups for the case of higher stable rank are best studied using the rather technical open projection picture of the Cuntz semigroup. The open projection picture resembles K-theory sufficiently that  the techniques used \cite{kuce.classify.Hopf.by.Ktheory0} for classifying \textsl{C*}-algebraic quantum groups by K-theory may go through when K-theory is replaced by Cuntz semigroups.

It is quite possible that the above can be viewed as an algebraic topology type of obstacle to the existence of certain co-actions. There is considerable evidence that the Cuntz semigroup and sometimes K-theory contain a surprising amount of information about a \textsl{C*}-algebra. This being the case, one could expect the above invariant to also contain a lot of information. Certainly, the above can be used to provide algebraic obstacles to the existence of certain co-actions, and when co-actions do exist, it could be reasonable to in some sense classify them by their invariants in K-theory or the Cuntz semigroup.

The work of Goswami \cite{Goswami} on non-existence of certain co-actions, again in the compact case, provides evidence that the question of nonexistence may be especially interesting in the compact case that we have been looking at. Eventually, applications may lie in the direction of the noncommutative Borsuk-Ulam theorem \cite{BDH2015}.

%
 \section*{Conflict of interest}

 The author declares  no conflict of interest.



\end{document}